\newtheorem{introtheorem}{Theorem}
\newtheorem{thm}{Theorem}[section]
\newtheorem{lem}[thm]{Lemma}
\newtheorem{prop}[thm]{Proposition}
\theoremstyle{definition}
\newtheorem{defn}[thm]{Definition}
\theoremstyle{remark}
\newtheorem{rmk}[thm]{Remark}
\newtheorem{exm}[thm]{Example}
\newcommand{\CC}{\mathcal{C}}
\newcommand{\CD}{\mathcal{D}}
\newcommand{\CP}{\mathcal{P}}
\def\NN{\ensuremath{\mathbb{N}}}
\def\QQ{\ensuremath{\mathbb{Q}}}
\def\ZZ{\ensuremath{\mathbb{Z}}}
\renewcommand{\ker}{\operatorname{Ker}}
\renewcommand{\epsilon}{\varepsilon}
\newcommand{\im}{\operatorname{Im}}
\newcommand{\supp}{\operatorname{supp}}
\newcommand{\Fun}{\operatorname{Fun}}
\newcommand{\colim}{\operatorname{colim}}
\newcommand{\hocolim}{\operatorname{hocolim}}
\renewcommand{\lim}{\operatorname{lim}}
\newcommand{\DCC}{\operatorname{DCC}}
\newcommand{\Ho}{\operatorname{Ho}}
\newcommand{\Aut}{\operatorname{Aut}}
\newcommand{\End}{\operatorname{End}}
\newcommand{\CAT}{\operatorname{CAT}}
\newcommand{\PSL}{\operatorname{PSL}}
\newcommand{\Ab}{{\operatorname{Ab}}}
\newcommand{\Top}{{\operatorname{Top}}}
\newcommand{\SSets}{{\operatorname{SSets}}}
\newcommand{\Ch}{\operatorname{Ch}}
\newcommand{\lmod}{{\operatorname{-mod}}}
\newcommand{\op}{{\operatorname{op}}}
\newcommand{\direct}[1]{\overrightarrow{#1}}
\newcommand{\invers}[1]{\overleftarrow{#1}}
\title{Mackey functors for posets}
\author[G. Carrión Santiago]{Guille Carrión Santiago}
\email{g.carrionsantiago@uma.es}
\author[A. Díaz ramos]{Antonio Díaz Ramos}
\email{adiazramos@uma.es}
\address{Departamento de Ágebra, Geometría y Topología, Universidad de Málaga, Málaga, Spain}
\date{2023}
\subjclass[2020]{
55P99, 
55R35, 
55N30, 
18G10 
}
\keywords{
Higher limit, Model category, Pseudo-projective, Mackey functor, Acyclic functor, Homology decomposition, Bianchi group
}
\date{\today}
\begin{document}
\maketitle
\begin{abstract}
We characterize cofibrant objects in the category of functors indexed in a filtered poset and we show that these objects are acyclic. As a consequence, we show that Mackey functors over posets are also acyclic, where we define this type of Mackey functors mimicking the classical notion. As application, we study homotopy colimits over posets and we give a homology decomposition for the classifying space of the Bianchi group $\Gamma_1$.
\end{abstract}

\section{Introduction}
Functors with values in abelian groups, as well as their higher direct and inverse limits, often occur in homotopy theory and group theory. For instance, in (co)homological decompositions \cite{D1997}, obstruction theory for maps \cite{JMO1994}, existence and uniqueness of the classifying space for fusion systems \cite{BLO2003} or Alperin weight conjecture \cite{Linkclemann2005}. Fundamental vanishing results for higher limits, i.e., for $\lim$-acyclicity or $\colim$-acyclicity of a functor, are related to Mackey functors \cite{JM1992, DP2015} and $\Lambda$-functors \cite{JMOI1992,JMOII1992}. Under mild assumptions, higher limits over a category may be reduced to higher limits over a poset \cite{S1999} and, for posets, well-known conditions for acyclicity are the Mittag-Leffler condition and directed posets.

Within the context of homological algebra, higher limits are the derived functors of direct and inverse limits. In this work, we follow the point of view of homotopy to study these derived functors over posets. More precisely, if $\CP$ is a filtered poset, i.e., a poset equipped with an order-preserving map $\CP\to \NN$, and $R$ is a commutative ring with unit, we endow the category of functors $\Fun(\CP,\Ch(R))$ with a structure of model category such that a cofibrant functor ${F\colon \CP\to R\lmod}$ is $\colim$-acyclic, where we see $F$ concentrated in degree $0$ in the category of unbounded chain complexes $\Ch(R)$. It turns out that $F$ is cofibrant if and only if
\begin{equation}\label{equ:cofibrant_to_Ch(Ab)}
\colim_{\CP_{<i}}F\to F(i)\text{ is a monomorphism for every $i\in \CP$.}
\end{equation}
We show that this condition is equivalent, over a poset $\CP$ satisfying the descending chain condition ($\DCC$ poset for short), to the notion of pseudo-projective functor.

\begin{defn}\label{def:pseudo-projective}
A functor $F\colon \CP\to R\lmod$ over a $\DCC$ poset is \emph{pseudo-projective} at $i\in \CP$ if, for every finite subset $J\subset \CP_{\le i}$ and every element $\oplus_{j\in J}x_j\in\bigoplus_{j\in J} F(j)$, the condition:
\[
\sum_{j\in J} F(j< i)(x_j)=0
\]
implies that $x_j\in \im_F(j)=\sum_{k<j}\im F(k< j)$ for every $j\in \max J$. We say that $F$ is \emph{pseudo-projective} if $F$ is pseudo-projective at $i\in \CP$ for every $i\in \CP$. 
\end{defn}

Pseudo-projectivity first appeared in \cite{DiazRamos2009}, where it is shown that a, over a $\DCC$ poset $\CP$, a functor ${F\colon \CP\to R\lmod}$ is projective if and only if it is pseudo-projective and $F(i)/\im_F(i)$ is free for any $i\in P$.

\begin{introtheorem}\label{thm:pseudo-sii-locally_over_DCC}
Let $\CP$ be a $\DCC$ poset, and $F\colon \CP\to R\lmod$ be a functor. Then, $F$ satisfies \eqref{equ:cofibrant_to_Ch(Ab)} if and only if 
$F$ is pseudo-projective.
\end{introtheorem}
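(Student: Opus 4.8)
The plan is to work with the explicit model of the colimit. Writing $C_i=\bigoplus_{j<i}F(j)$ with canonical inclusions $\iota_k\colon F(k)\to C_i$, the colimit over $\CP_{<i}$ is $C_i/N_i$, where $N_i$ is the submodule generated by the elements $\iota_k(y)-\iota_{k'}\big(F(k<k')(y)\big)$ for $k<k'<i$ and $y\in F(k)$. Let $\theta_i\colon C_i\to F(i)$ be the map induced by the structure maps, $\theta_i(\oplus x_j)=\sum_{j<i}F(j<i)(x_j)$. Functoriality gives $N_i\subseteq\ker\theta_i$, and $\im\theta_i=\im_F(i)$, so $\theta_i$ factors through an isomorphism $C_i/N_i\xrightarrow{\sim}\im_F(i)$ precisely when $\ker\theta_i=N_i$; thus \eqref{equ:cofibrant_to_Ch(Ab)} at $i$ is equivalent to $\ker\theta_i=N_i$. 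The whole statement becomes the equivalence, for every $i$, between $\ker\theta_i=N_i$ and pseudo-projectivity at $i$, and the engine for both implications is well-founded induction on $\CP$: since $\CP$ is $\DCC$, the strict order is well-founded, hence so is the induced Dershowitz--Manna multiset order on finite subsets, which is what I will induct along.

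For the implication pseudo-projective $\Rightarrow$ \eqref{equ:cofibrant_to_Ch(Ab)}, I must show $\ker\theta_i\subseteq N_i$. Given $z\in\ker\theta_i$, I consider the coset $z+N_i$ (all of whose elements lie in $\ker\theta_i$) and choose a representative $z^{*}=\oplus x^{*}_j$ whose support $J^{*}$ is minimal in the multiset order. If $J^{*}\ne\emptyset$, pick $j_0\in\max J^{*}$; since $\sum_{j\in J^{*}}F(j<i)(x^{*}_j)=0$, pseudo-projectivity at $i$ (applied to $J^{*}\subseteq\CP_{<i}\subseteq\CP_{\le i}$) gives $x^{*}_{j_0}\in\im_F(j_0)$, say $x^{*}_{j_0}=\sum_{k<j_0}F(k<j_0)(w_k)$. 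Subtracting $\iota_{j_0}(x^{*}_{j_0})-\sum_{k<j_0}\iota_k(w_k)\in N_i$ produces another representative of $z+N_i$ in which $j_0$ has been deleted and only indices $<j_0$ are added, strictly decreasing the support in the multiset order and contradicting minimality. Hence $z^{*}=0$ and $z\in N_i$.

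For the converse \eqref{equ:cofibrant_to_Ch(Ab)} $\Rightarrow$ pseudo-projective, fix $i$, a finite $J\subseteq\CP_{\le i}$ and $\oplus_{j\in J}x_j$ with $\sum_{j\in J}F(j<i)(x_j)=0$, and $j_0\in\max J$. If $i\in J$ then $\max J=\{i\}$ and $x_i=-\sum_{j<i}F(j<i)(x_j)\in\im_F(i)$, so assume $i\notin J$. Then $\oplus_{j\in J}x_j\in\ker\theta_i=N_i$, so it admits an expansion as a finite sum of generators $\iota_k(y_{k,k'})-\iota_{k'}F(k<k')(y_{k,k'})$; let $T$ be the finite set of indices occurring. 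The obstacle is that this expansion may use auxiliary indices $k'>j_0$ lying outside $J$, and there is in general no cocone out of $\colim_{\CP_{<i}}F$ detecting $x_{j_0}$ modulo $\im_F(j_0)$ because the maps $F(j_0<k')$ need not split. My plan is to remove such indices one at a time. Choosing $m$ maximal in $T$ with $m>j_0$ (necessarily $m\notin J$, by maximality of $j_0$ in $J$), the $m$-component of the expansion equals $-\sum_{k<m}F(k<m)(y_{k,m})$ and must vanish, so $\oplus_{k<m}y_{k,m}\in\ker\theta_m$; applying \eqref{equ:cofibrant_to_Ch(Ab)} at the intermediate level $m$ gives $\oplus_{k<m}y_{k,m}\in N_m$, i.e. it is a combination of generators indexed below $m$. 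Since the $\iota_m$-parts of the $m$-terms cancel ($\sum_k F(k<m)(y_{k,m})=0$), substituting this combination rewrites $\oplus_{j\in J}x_j$ as a sum of generators avoiding $m$, replacing $m$ by indices $<m$. This strictly decreases the multiset $\{t\in T:t>j_0\}$, so after finitely many such rewritings (well-foundedness again) no index of $T$ exceeds $j_0$; then $j_0\in\max T$, no generator has top index $j_0$, and reading off the $j_0$-component gives $x_{j_0}=-\sum_{k<j_0}F(k<j_0)(y_{k,j_0})\in\im_F(j_0)$, as required.

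I expect the converse to be the main obstacle: the naive strategy of detecting $x_{j_0}$ by mapping the colimit onto $F(j_0)/\im_F(j_0)$ fails over a general ring because upward maps need not be split monomorphisms, and the point of the argument above is to trade this missing splitting for repeated use of \eqref{equ:cofibrant_to_Ch(Ab)} at the intermediate levels $m$, all controlled by the Dershowitz--Manna order. The remaining steps—checking that $\im\theta_i=\im_F(i)$, that $N_i\subseteq\ker\theta_i$, and the bookkeeping of supports under each rewriting—are routine.
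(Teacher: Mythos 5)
Your proposal is correct, and both directions run on the same combinatorial engine as the paper's \cref{sec:prf_main_thm}: present $\colim_{\CP_{<i}}F$ as $\bigoplus_{j<i}F(j)$ modulo the relation submodule and eliminate maximal indices by well-founded induction on index sets. The forward direction (pseudo-projective $\Rightarrow$ \eqref{equ:cofibrant_to_Ch(Ab)}) is essentially the paper's \cref{lem:sequence_aux_lem} and the lemma following it: your minimal-support representative of $z+N_i$ is the same content as the paper's terminating sequence $\{x^n\}$ with $\supp(x^{n+1})<\supp(x^n)$, recast as a minimality argument. In the converse you genuinely diverge in organization: the paper's final lemma runs an induction over the $\DCC$ poset and, to remove a bad maximal index $m\notin\supp(x)$ from the relation expansion, invokes pseudo-projectivity at $m$ (supplied by the induction hypothesis) together with \cref{lem:sequence_aux_lem}; you instead observe that $\oplus_{k<m}y_{k,m}$ lies in $\ker\theta_m$ and apply the hypothesis \eqref{equ:cofibrant_to_Ch(Ab)} directly at the object $m$, which is available globally. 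This dispenses with the poset induction and with re-entering the sequence lemma, at the price of tracking termination with the Dershowitz--Manna multiset order on the indices exceeding $j_0$ --- and that care is needed, since the paper's relation ``$J<I$'' (every element of $J$ strictly below some element of $I$) does not decrease here: untouched maximal indices of $T$ are not dominated by anything removed, so your explicit appeal to the multiset order is the right move. One harmless slip: near the end of the converse you write that ``no generator has top index $j_0$'', whereas what you use (and what is true once no index of $T$ exceeds $j_0$) is that no generator has $j_0$ as its \emph{lower} index; the displayed conclusion $x_{j_0}=-\sum_{k<j_0}F(k<j_0)(y_{k,j_0})\in\im_F(j_0)$ that follows is the correct one and comes precisely from the generators whose upper index is $j_0$.
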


As filtered posets satisfy $\DCC$, we also obtain the following.

\begin{introtheorem}\label{thm:pseudo-sii-locally}
Let $\CP$ be a filtered poset and $F\colon \CP\to R\lmod$ be a functor. Then $F$ is cofibrant if and only if $F$ is pseudo-projective. Moreover, in this case, $F$ is $\colim$-acyclic.
\end{introtheorem}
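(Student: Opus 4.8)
The plan is to derive \cref{thm:pseudo-sii-locally} by feeding the combinatorial equivalence of \cref{thm:pseudo-sii-locally_over_DCC} into the model-categorical machinery governing $\Fun(\CP,\Ch(R))$, and then to read off acyclicity formally. The first step is the observation that a filtered poset is a $\DCC$ poset: since its $\NN$-valued height function is strictly monotone, any strictly descending chain in $\CP$ maps to a strictly decreasing sequence of natural numbers and must therefore be finite. Consequently \cref{thm:pseudo-sii-locally_over_DCC} applies to $\CP$ and yields that $F$ is pseudo-projective if and only if $F$ satisfies \eqref{equ:cofibrant_to_Ch(Ab)}.

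It then suffices to identify \eqref{equ:cofibrant_to_Ch(Ab)} with cofibrancy of $F$, viewed as concentrated in degree $0$ in $\Fun(\CP,\Ch(R))$. For this I would invoke the explicit description of cofibrant objects attached to the model structure: exhausting $\CP$ along its height filtration, a cofibrant object is assembled as a transfinite composite of pushouts of generating cofibrations, and at the stage that attaches an element $i$ the relevant latching object is exactly $\colim_{\CP_{<i}}F$. Unwinding the attaching data in degree $0$, the single condition that survives is that each latching map $\colim_{\CP_{<i}}F\to F(i)$ be a monomorphism. This is precisely \eqref{equ:cofibrant_to_Ch(Ab)}, so we obtain the chain of equivalences: $F$ is cofibrant if and only if it satisfies \eqref{equ:cofibrant_to_Ch(Ab)}, if and only if it is pseudo-projective.

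The final assertion is then formal. The functor $\colim\colon\Fun(\CP,\Ch(R))\to\Ch(R)$ is left adjoint to the constant-diagram functor $\Delta$, and the model structure is arranged so that $\colim$ is left Quillen; hence applying $\colim$ to the cofibrant object $F$ computes the derived (homotopy) colimit. Because $F$ is concentrated in degree $0$, the complex $\colim_{\CP}F$ is again concentrated in degree $0$, so its homology vanishes outside degree $0$. As these homology groups are the higher derived colimits $\colim^{s}F$, we conclude $\colim^{s}F=0$ for every $s>0$, i.e.\ $F$ is $\colim$-acyclic.

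I expect the only genuine difficulty to sit in the middle paragraph, namely the faithful translation of the abstract cofibrancy condition into the concrete monomorphism condition \eqref{equ:cofibrant_to_Ch(Ab)}: one must pin down the latching object at each height as $\colim_{\CP_{<i}}F$ and verify that, for diagrams concentrated in degree $0$, the cellular attachment contributes no constraint beyond injectivity of the latching maps. Everything else---the reduction to $\DCC$, the appeal to \cref{thm:pseudo-sii-locally_over_DCC}, and the acyclicity via left Quillen functoriality---is formal once that characterization of cofibrant objects is available.
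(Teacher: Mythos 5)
Your proposal is correct and follows essentially the same route as the paper: reduce to the $\DCC$ case via the height function, invoke \cref{thm:pseudo-sii-locally_over_DCC} to identify pseudo-projectivity with the latching-map condition \eqref{equ:cofibrant_to_Ch(Ab)}, identify that condition with Reedy cofibrancy for the model structure of \cref{thm:Modelcategory_Ch(Ab)} (where the latching object at $i$ is $\colim_{\CP_{<i}}F$ and cofibrations in the injective structure on $\Ch(R)$ are the monomorphisms), and deduce $\colim$-acyclicity from the fact that $\colim$ is left Quillen applied to a cofibrant object concentrated in degree $0$. The paper packages the middle and last steps as parts (2) and (3) of \cref{thm:Modelcategory_Ch(Ab)}, citing Kan's Reedy model structure theorem rather than an explicit cell-attachment argument, but the content is the same.
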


The part of $\colim$-acyclicity easily follows from model category theory, hence providing an alternative proof of \cite[Theorem B]{DiazRamos2009} when $\CP$ is filtered. In addition, this theorem is useful in applications as it is often easier to study pseudo-projectivity than cofibrancy.
As a first application of pseudo-projectivity, and inspired by the following example, we provide a definition of Mackey functors for posets below. 

\begin{exm}
\label{exm:ClassicMackeyNormal}
Given a commutative ring with unit $R$, a \emph{Mackey functor} for a finite group $G$ \cite{DRESS1973},  consists of a pair of functors $(M_*, M^*)$ from the category of finite $G$-sets to the category of $R$-modules, such that $M_*$ is covariant, $M^*$ is contravariant, both coincide on objects and take disjoint union to direct sum, and they satisfy the Mackey formula:
\begin{equation}
\label{Mackey/Mackey equation}
M^*([\iota_V^U])\circ M_*[\iota_W^U])=\sum_{x\in [V\backslash U/W]} M_*([\iota_{V\cap^{x}\!W}^ V])\circ M_*([c_x])\circ M^*([\iota_{V^x\cap W}^ W])
\end{equation}
where $V,W\le U$ , $\iota_\cdot^\cdot$ denotes inclusion, $c_x$ denotes conjugation by $x$, and $[V\backslash U/ W]$ is a set of representatives in $G$ for the double cosets $V\backslash U/W$. If we restrict these functors to the meet-semilattice of normal subgroups of $G$, formula \eqref{Mackey/Mackey equation} becomes
\[
M^*([\iota_V^U])\circ M_*([\iota_W^U]) = M_*([\iota_{V\cap W}^ V])\circ\left(\sum_{x\in [V\setminus U/W]} M_*([c_x])\right)\circ M^*([\iota_{V\cap W}^ W]).
\]
\end{exm}

\begin{defn}
\label{def:MackeyfunctorCC}
For $\CP$ a filtered meet-semilattice and a category $\CC$, a pair of functors $(F,G)$ is a \emph{Mackey functor} if $F\colon \CP\to \CC$ is covariant, $G\colon \CP^{op}\to \CC$ is contravariant, $F(i)=G(i)$ for all $i\in \CP$, and for all $j<i,k<i$ there exist 
\[
\alpha(i,j,k)\in \End_\CC(F(j))\text{, }\beta(i,j,k)\in \End_\CC(F(k\wedge j))\text{, and }\gamma(i,j,k)\in \End_\CC(F(k))
\]
such that
\begin{align*}
G(j<i)\circ F(k<i)=&\alpha(i,j,k) \circ F(k\wedge j<j)\circ G(k\wedge j<k)\\ 
=&  F(k\wedge j<j)\circ \beta(i,j,k)\circ G(k\wedge j<k)\\
=&  F(k\wedge j<j)\circ G(k\wedge j<k)\circ\gamma(i,j,k).
\end{align*}
We say that $(F,G)$ has a \emph{quasi-unit} if $\alpha(i,j,j), \gamma(i,j,j), \beta(i,j,j) \in \Aut_\CC(F(j))$ for $j<i$.
\end{defn}

The term \emph{quasi-unit} is borrowed from \cite[5.7 Definition]{JACKOWSKI1992113} and implies acyclicity. Example \ref{exm:ClassicMackeyNormal} restricted to the meet-semilattice of central subgroups is a Mackey functor with $\alpha(U,V,V)$ equal to multiplication by $|U/V|$, so that this Mackey functor has a quasi-unit if and only if these numbers are invertible in the ring $R$.

In addition, for $\CC=R\lmod$, we also define \emph{weak Mackey functor} by dropping the contravariant functoriality and the meet-semilattice constraint; see \cref{def:weakMackeyfunctorAb} for details. The contravariant part of a Mackey functor is then a weak Mackey functor, and we have the following result.

\begin{introtheorem}\label{thm:Mackey_is_acyclic}
Let  $F\colon \CP\to R\lmod$ be a weak Mackey functor with a quasi-unit (or the covariant part of a Mackey functor with a quasi-unit). Then $F$ is $\colim$-acyclic.
\end{introtheorem}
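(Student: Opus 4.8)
The plan is to deduce this from \cref{thm:pseudo-sii-locally}. Since a filtered poset satisfies $\DCC$, that theorem reduces $\colim$-acyclicity of $F$ to showing that $F$ is pseudo-projective. So the entire task is to verify \cref{def:pseudo-projective} for a (weak) Mackey functor $F$ with a quasi-unit, and throughout $G(j<i)\colon F(i)\to F(j)$ denotes the contravariant (restriction) maps and $F(j<i)$ the covariant ones.

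Fix $i\in\CP$, a finite $J\subseteq\CP_{\le i}$, and an element $\oplus_{j\in J}x_j\in\bigoplus_{j\in J}F(j)$ with $\sum_{j\in J}F(j<i)(x_j)=0$; fix $j_0\in\max J$, for which we must show $x_{j_0}\in\im_F(j_0)$. If $i\in J$ then $\max J=\{i\}$ and the relation itself reads $x_i=-\sum_{j<i}F(j<i)(x_j)\in\im_F(i)$, so we may assume $j_0<i$ and hence $j<i$ for all $j\in J$. First I would apply the restriction $G(j_0<i)$ to the defining relation, obtaining $\sum_{j\in J}G(j_0<i)\circ F(j<i)(x_j)=0$, and then rewrite each summand using the Mackey formula in the form $G(j_0<i)\circ F(j<i)=F(j\wedge j_0<j_0)\circ\beta(i,j_0,j)\circ G(j\wedge j_0<j)$. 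For $j\ne j_0$, maximality of $j_0$ forces $j\wedge j_0<j_0$ strictly, so the outermost map $F(j\wedge j_0<j_0)$ lands the whole summand inside $\im F(j\wedge j_0<j_0)\subseteq\im_F(j_0)$; for $j=j_0$ the flanking maps are identities and the summand is exactly $\beta(i,j_0,j_0)(x_{j_0})$. Writing $\phi:=\beta(i,j_0,j_0)=G(j_0<i)\circ F(j_0<i)$, which is an automorphism of $F(j_0)$ by the quasi-unit hypothesis, this yields $\phi(x_{j_0})\in\im_F(j_0)$.

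It then remains to promote this to $x_{j_0}\in\im_F(j_0)$, and the clean way is to prove that $\phi$ restricts to an automorphism of the submodule $\im_F(j_0)$: granting this, $\phi(x_{j_0})\in\im_F(j_0)=\phi(\im_F(j_0))$ together with injectivity of $\phi$ gives $x_{j_0}\in\im_F(j_0)$. For preservation and surjectivity I would compute, for each $l<j_0$, that $\phi\circ F(l<j_0)=G(j_0<i)\circ F(l<i)=F(l<j_0)\circ\beta(i,j_0,l)$ (using functoriality of the covariant part and the Mackey formula, with $l\wedge j_0=l$), so that $\phi(\im F(l<j_0))=\im\bigl(F(l<j_0)\circ\beta(i,j_0,l)\bigr)$. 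Summing over $l<j_0$ shows $\phi(\im_F(j_0))\subseteq\im_F(j_0)$, with equality precisely when each off-diagonal inner endomorphism $\beta(i,j_0,l)$ is an automorphism of $F(l)$.

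Establishing that last point is, I expect, the main obstacle. For the covariant part of a genuine Mackey functor it can be extracted from the quasi-unit alone: the level-$i$ quasi-unit $\phi_l=G(l<i)\circ F(l<i)\in\Aut(F(l))$ factors, via contravariant functoriality $G(l<i)=G(l<j_0)\circ G(j_0<i)$ and the identity above, as $\phi_l=\bigl(G(l<j_0)\circ F(l<j_0)\bigr)\circ\beta(i,j_0,l)$, where $G(l<j_0)\circ F(l<j_0)$ is the level-$j_0$ quasi-unit and hence also an automorphism; therefore $\beta(i,j_0,l)$ is a composite of automorphisms, so an automorphism, and $\phi(\im_F(j_0))=\im_F(j_0)$ as required. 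In the weak Mackey setting $G$ is no longer functorial and the meet need not exist, so this factorisation is unavailable; there the corresponding statement, that restriction followed by induction does not shrink the relevant images, equivalently that the quasi-unit acts invertibly on $\im_F(j_0)$, has to be read off directly from the axioms of \cref{def:weakMackeyfunctorAb}, and checking that these axioms indeed deliver it is the delicate step that makes the weak case genuinely more work than the genuine one.
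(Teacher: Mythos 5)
Your strategy coincides with the paper's: reduce to pseudo-projectivity via \cref{thm:pseudo-sii-locally}, apply $G(j_0<i)$ to the relation, note that the cross terms land in $\im_F(j_0)$ while the diagonal term is the quasi-unit applied to $x_{j_0}$, and then undo the quasi-unit. Your treatment of the genuine Mackey case is complete and correct; the factorisation $\alpha(i,l,l)=\alpha(j_0,l,l)\circ\beta(i,j_0,l)$ forcing $\beta(i,j_0,l)$ to be an automorphism is a nice self-contained argument, and is in fact more than the paper does (the paper just remarks that the covariant part of a Mackey functor is a weak Mackey functor and argues only the weak case).

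The gap is that you leave the weak case --- which is the principal case of the theorem --- unfinished at exactly the point you call ``the delicate step''. There is no delicacy there: \cref{def:weakMackeyfunctorAb} requires the quasi-unit to satisfy $\alpha(i,j_0)\in\Aut^F_{R\lmod}(j_0)$, and by \cref{def:F-linearCC} membership in $\Aut^F$ (as opposed to being merely an automorphism) means precisely that for every $l<j_0$ there is $\beta_l\in\Aut_{R\lmod}(F(l))$ with $\alpha(i,j_0)\circ F(l<j_0)=F(l<j_0)\circ\beta_l$. Hence $\alpha(i,j_0)^{-1}\circ F(l<j_0)=F(l<j_0)\circ\beta_l^{-1}$, so $\alpha(i,j_0)^{-1}$ carries $\im_F(j_0)=\sum_{l<j_0}\im F(l<j_0)$ into itself, and $x_{j_0}=\alpha(i,j_0)^{-1}\bigl(\alpha(i,j_0)(x_{j_0})\bigr)\in\im_F(j_0)$. (Note you do not need $\phi$ to restrict to an automorphism of $\im_F(j_0)$, only that $\phi^{-1}$ preserves it.) In short, the statement you worry might fail is exactly what the hypothesis ``quasi-unit lies in $\Aut^F_{R\lmod}$'' was formulated to supply, and the paper concludes in one line this way; with that observation inserted your proof is complete and is essentially the paper's.
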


Notice that any functor $H\colon \CC\to \CD$ takes Mackey functors over $\CC$ to Mackey functors over $\CD$. A natural example of such a functor $H$ is given by the homology functor $H_*\colon \SSets \to R\lmod$. In the category of spaces $\SSets$, the previous model category techniques work and, as before, we can drop the contravariant functoriality and define weak Mackey functors over $\SSets$, see Definition \ref{def:weakMackeyfunctorSSets}. We deduce the following topological application involving homotopy colimits and their homology. 

\begin{introtheorem}\label{thm:MackeySSets_nice_hocolim}
Let $F\colon \CP\to \SSets$ be a weak Mackey functor with a quasi-unit (or the covariant part of a Mackey functor with a quasi-unit). Then $\hocolim_\CP F\simeq \colim_\CP F$ and $H_*(\hocolim_\CP F)=\colim H_*(F)$.
\end{introtheorem}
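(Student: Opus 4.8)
The plan is to separate the two conclusions: the homological identity $H_*(\hocolim_\CP F)=\colim_\CP H_*(F)$ will follow from \cref{thm:Mackey_is_acyclic} through a collapsing spectral sequence, while the weak equivalence $\hocolim_\CP F\simeq\colim_\CP F$ will be handled with the model category on $\Fun(\CP,\SSets)$ alluded to after \cref{thm:Mackey_is_acyclic}. Both rest on one preliminary observation: homology preserves Mackey structure. Concretely, applying $H_q$ objectwise to $F$ yields $H_q(F)\colon\CP\to R\lmod$; the defining relations of \cref{def:MackeyfunctorCC} (or their weak $\SSets$-variant) are preserved because $H_q$ is a functor, and since $H_q$ carries the quasi-unit automorphisms of each $F(j)$ to isomorphisms of $H_q(F(j))$, the functor $H_q(F)$ is again a weak Mackey functor with a quasi-unit. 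By \cref{thm:Mackey_is_acyclic} it is therefore $\colim$-acyclic.

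For the homology statement I would feed this into the Bousfield--Kan spectral sequence computing the homology of a homotopy colimit,
\[
E^2_{s,t}=\colim_s H_t(F)\Longrightarrow H_{s+t}(\hocolim_\CP F),
\]
whose $E^2$-term is built from the higher colimits of the coefficient systems $H_t(F)$. Acyclicity kills everything off the column $s=0$, so $E^2=E^\infty$ is concentrated there and there are no extension problems; the edge isomorphism reads $H_n(\hocolim_\CP F)\cong\colim_0 H_n(F)=\colim_\CP H_n(F)$ for all $n$, which is the desired identity.

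For the weak equivalence I would work in the model structure on $\Fun(\CP,\SSets)$ obtained by transporting the constructions behind \eqref{equ:cofibrant_to_Ch(Ab)} to simplicial sets, so that $\hocolim_\CP$ is the left derived functor of $\colim_\CP$ and the cofibrant diagrams are those whose latching maps $\colim_{\CP_{<i}}F\to F(i)$ are cofibrations (monomorphisms). Taking a cofibrant replacement $q\colon\tilde F\to F$, one has $\hocolim_\CP F\simeq\colim_\CP\tilde F$ and the comparison map is $\colim_\CP(q)$, so it is enough to see that $\colim_\CP\tilde F\to\colim_\CP F$ is a weak equivalence. I would establish this by induction along the filtration $\CP_{\le 0}\subset\CP_{\le 1}\subset\cdots$ afforded by $\CP\to\NN$: moving from level $n-1$ to level $n$ glues, at each $i$ of level $n$, a pushout along the latching map, and the quasi-unit supplies precisely the self-equivalences needed to identify the ordinary pushout building $\colim_\CP F$ with the homotopy pushout building $\hocolim_\CP F$, so that the two agree up to weak equivalence at every stage and hence in the colimit over the filtration.

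The main obstacle is this inductive step. The homological half reduces cleanly to \cref{thm:Mackey_is_acyclic}, but that theorem only governs $R\lmod$-valued coefficients and hence sees only homology; it is silent about $\pi_1$ and about the unstable homotopy type, so it cannot by itself force $\colim_\CP(q)$ to be a weak equivalence. One must therefore re-run the acyclicity mechanism of \cref{thm:Mackey_is_acyclic} one level higher, directly in $\SSets$, using the space-level quasi-unit --- not merely its homological image --- to split off the latching contribution at each $i$ and verify that replacing $\colim$ by $\hocolim$ creates no new homotopy.
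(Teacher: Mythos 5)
The homological half of your argument coincides with the paper's: $H_q\circ F$ inherits the weak Mackey structure and the quasi-unit because $H_q$ is a functor, \cref{thm:Mackey_is_acyclic} gives $\colim$-acyclicity, and the Bousfield--Kan spectral sequence collapses onto the column $s=0$ with no extension problems. That part is correct and is exactly what the paper does.

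The gap is in the weak equivalence $\hocolim_\CP F\simeq\colim_\CP F$. You rightly observe that homological acyclicity says nothing about the unstable homotopy type and that a space-level argument is needed, but you then only assert that ``the quasi-unit supplies precisely the self-equivalences needed to identify the ordinary pushout with the homotopy pushout''; that assertion is the entire content of the claim and is never proved. What is actually required --- and what the paper proves as \cref{lem:MackeyfunctorSSets_is_cofibrant} --- is that $F$ itself is Reedy cofibrant, i.e.\ that each latching map $\epsilon\colon\colim_{\CP_{<i}}F\to F(i)$ is a degreewise injection of simplicial sets; once that is known, \cref{thm:Modelcategory_SSets}(3) gives the equivalence at once, with no cofibrant replacement and no induction over the filtration. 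The proof of injectivity is a concrete simplex-level computation that your sketch omits: if $F(j<i)(x)=F(j'<i)(x')$ with $x\in F(j)$, $x'\in F(j')$, apply $G(j<i)$ and the Mackey relation
\[
G(j<i)\circ F(j'<i)=\alpha(i,j,j')\circ F(j\wedge j'<j)\circ G(j\wedge j'<j'),
\]
then use the invertibility and $F$-linearity of $\alpha(i,j,j)$ to produce $x''\in F(j\wedge j')$ with $F(j\wedge j'<j)(x'')=x$; symmetrically obtain $x'''$ mapping to $x'$; and monicity of $F$ (\cref{rmk:Mackeyfunctor_is_monic}, itself a consequence of the quasi-unit) forces $x''=x'''$, hence $[x]=[x']$ in $\colim_{\CP_{<i}}F$. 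Note that this argument uses the meet-semilattice structure of $\CP$ (the element $j\wedge j'$) in an essential way, which your proposal never invokes. Without it, the claim that the stagewise pushouts computing $\colim_\CP F$ are homotopy pushouts is unsupported --- as your own text concedes when it calls this ``the main obstacle.''
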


In the language of \cite{D1998},  \cref{thm:MackeySSets_nice_hocolim} asserts that the corresponding integral homological decomposition is \emph{sharp}. This theorem may be applied to the construction of Davis-Januszkiewicz spaces and its generalization to polyhedral products over finite posets, see Examples \ref{exm:Davis-Januszkiewicz} and \ref{exm:KishimotoLevi}. As a second application of pseudo-projectivity, we provide a integral homology decomposition of the classifying space of the Bianchi group $\Gamma_1=\PSL_2(\mathcal{O}_1)$, where $\mathcal{O}_1$ is the ring of integers of the quadratic field $\QQ(\sqrt{-1})$.

\begin{introtheorem}\label{thm:integral_homology_decomposition_Bianchi}
There exists a poset $\CP_1$ of proper subgroups of $\Gamma_1$ and a homotopy equivalence
\[
\hocolim_{U\in \CP_1} B(U)\stackrel{\simeq}\longrightarrow B(\Gamma_1).
\]
\end{introtheorem}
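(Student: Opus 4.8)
The plan is to realize $B(\Gamma_1)$ as the homotopy orbit space of a contractible $\Gamma_1$-CW complex and then apply the cell-stabilizer (isotropy) decomposition. First I would use the proper, cocompact action of $\Gamma_1=\PSL_2(\mathcal{O}_1)$ on hyperbolic $3$-space $\mathbb{H}^{3}$. Since $\mathcal{O}_1=\mathbb{Z}[i]$ and the virtual cohomological dimension of a Bianchi group is $2$, the work of Mendoza (and Fl\"oge in the presence of cusps) provides a $\Gamma_1$-equivariant deformation retraction of $\mathbb{H}^{3}$ onto a $2$-dimensional spine $X$. This $X$ is contractible, carries a cocompact $\Gamma_1$-CW structure, and all of its cell stabilizers $\Gamma_\sigma$ are finite, hence proper subgroups of the infinite group $\Gamma_1$. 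Contractibility of $X$ then yields $B(\Gamma_1)\simeq E\Gamma_1\times_{\Gamma_1}X$.

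Next I would decompose this Borel construction along the cells. The cocompact $\Gamma_1$-CW structure on $X$ determines a finite face poset $\CC$ of cell orbits and exhibits $X$, up to $\Gamma_1$-homotopy, as $\hocolim_{\sigma\in\CC^{\op}}\Gamma_1/\Gamma_\sigma$. Because the Borel construction commutes with homotopy colimits and sends $\Gamma_1/\Gamma_\sigma$ to $B(\Gamma_\sigma)$, I obtain
\[
B(\Gamma_1)\simeq E\Gamma_1\times_{\Gamma_1}X\simeq \hocolim_{\sigma\in\CC^{\op}} B(\Gamma_\sigma),
\]
where a face $\tau\le\sigma$ induces the inclusion $\Gamma_\sigma\subseteq\Gamma_\tau$ and the corresponding structure map $B(\Gamma_\sigma)\to B(\Gamma_\tau)$.

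Then I would replace $\CC^{\op}$ by a poset of subgroups. Sending each cell orbit to its stabilizer defines an order-preserving map $\CC^{\op}\to\CP_1$, where $\CP_1$ is the image poset of (finite, hence proper) stabilizer subgroups ordered by inclusion; under it the diagram above becomes the covariant functor $U\mapsto B(U)$ on $\CP_1$ with inclusion-induced maps. Using the explicit Mendoza cell complex for $\mathcal{O}_1$ available in the literature, I would tabulate these stabilizers and their inclusions and check, by a cofinality argument (contractibility of the relevant comma categories), that the projection $\CC^{\op}\to\CP_1$ leaves the homotopy type of the homotopy colimit unchanged. This gives the desired $\hocolim_{U\in\CP_1}B(U)\stackrel{\simeq}\longrightarrow B(\Gamma_1)$.

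The hard part will be this last passage from cells to subgroups: distinct cell orbits may share a stabilizer and the face poset need not be isomorphic to the inclusion poset of stabilizers, so the main effort is a careful bookkeeping of the cell structure for $\mathcal{O}_1$ together with the cofinality check ensuring the collapse is homotopy-final. Finally, to justify calling this an \emph{integral} homology decomposition, I would verify that the covariant diagram $U\mapsto H_*(B(U))$ is the covariant part of a Mackey functor with a quasi-unit; \cref{thm:MackeySSets_nice_hocolim} would then give sharpness, namely $H_*(B(\Gamma_1))\cong\colim_{\CP_1}H_*(B(U))$, which is where pseudo-projectivity enters.
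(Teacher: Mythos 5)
Your route is genuinely different from the paper's, which never touches hyperbolic space. The paper takes the explicit presentation $\Gamma_1=\langle a,b,c,d\mid\cdots\rangle$, checks that the displayed ten\-/element poset $\CP_1$ --- which contains the trivial group and, crucially, the \emph{infinite} subgroup $\langle c,d\rangle\cong\langle c\rangle\ast\langle d\rangle\cong\PSL_2(\ZZ)$ --- satisfies condition \eqref{equ:hypothesis_on_subgroup_poset}, and then invokes the fibration \eqref{equ:fibration_to_BGamma}, whose fiber $F$ has $H_n(F;\ZZ)=\colim_{n-1}H$. The whole proof thus reduces to the algebraic computation $\colim_1 H=0$, carried out by hand from the chain complex, the key input being that $H$ is pseudo-projective at the free product $\langle c,d\rangle$ by \cref{prop:H_pseusoprojective_at_U}. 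Since $\langle c,d\rangle$ is infinite it can never be a cell stabilizer of a proper action, so even if your construction succeeds it produces a decomposition over a different poset (which still suffices for the existential form of the statement). What your approach buys is geometric transparency and finiteness of all the groups involved; what the paper's buys is that it runs entirely inside the algebra of $\colim_1$ and showcases pseudo-projectivity.

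The genuine gap is your last step. The isotropy decomposition gives $B(\Gamma_1)\simeq\hocolim_{\sigma\in\CC^{\op}}B(\Gamma_\sigma)$ over the face poset of cell orbits, which is not yet indexed by a poset of subgroups. To descend along $\sigma\mapsto\Gamma_\sigma$ you need, first, that this is a well-defined order-preserving map into an honest inclusion poset (stabilizers are only defined up to conjugacy, so you must choose representative cells in a fundamental domain for which every face relation is realized by an actual inclusion of stabilizers), and second, that the map is homotopy final, i.e.\ that for each $U\in\CP_1$ the subposet $\{\sigma\mid U\subseteq\Gamma_\sigma\}$ of $\CC^{\op}$ is contractible. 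Neither is automatic, and there is no general principle that collapsing a face poset onto its poset of stabilizers preserves the homotopy colimit; it fails, for instance, whenever two non-incident cell orbits share a stabilizer. For $\mathcal{O}_1$ the quotient spine is a square whose face poset does embed into the subgroup lattice (it is visibly the paper's $\CP_1$ with $\langle c,d\rangle$ deleted), so your plan is salvageable, but only via the explicit verification against Mendoza's cell structure that you have deferred: as written, the ``cofinality check'' is the entire content of the theorem. Finally, your closing paragraph is mistaken: the decomposition is \emph{not} sharp --- \cref{homology_Bianchi_groups} exhibits a surviving column $E^2_{1,q}=\colim_1 H_q(B(\cdot);\ZZ)\neq 0$ for $q$ odd --- so $U\mapsto H_q(B(U))$ cannot be a weak Mackey functor with a quasi-unit and \cref{thm:MackeySSets_nice_hocolim} does not apply; no such verification is needed for the statement anyway, since a homotopy equivalence is already an integral homology decomposition.
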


Here, the morphisms in $\CP_1$ consists of the inclusions between the given subgroups. The notion of pseudo-projectivity is employed here to study the fiber of the natural map from the homotopy colimit to $B(\Gamma_1)$. It turns out that the homology of this fiber can be computed as the higher direct limits of an acyclic functor that is pseudo-projective at many but not all objects of $\CP_1$. As a direct consequence of this theorem, we can determine the homology and cohomology of $\Gamma_1$ using the classical Bousfield-Kan spectral sequence, see  \cref{homology_Bianchi_groups}. This contrasts with earlier computations of these (co)homology groups, in which the equivariant cohomology spectral sequence for an action of $\Gamma_1$ on a $2$-complex is employed, see \cite{SchwermerVogtmann}, \cite{Berkove}, \cite{BerkoveIntegral}.

\noindent
\textbf{Notation:} For a poset $\CP$ and a functor $F\colon \CP\to\CC$, we denote by $F(i<j)$ the image by $F$ of the only arrow from $i$ to $j$ and, with some abuse of notation, we define $F(i<i)=1_{F(i)}$. If $I$ and $J$ are subsets of $\CP$, we write ($J<I$) $J\leq I$ if every element of $J$ is (strictly) smaller than some element of $I$. If $\CC$ is co-complete and $i\in \CP$, we denote the natural map from the colimit as $\epsilon\colon \colim_{\CP_{<i}} F\to F(i)$, and we write $[x]$ to denote the class in this colimit with representative $x$. If $\CC=R\lmod$ and $x=\oplus_{i\in \CP} x_i\in \bigoplus_{i\in \CP} F(i)$, we set $\supp(x)=\{i\in \CP\mid x_i\neq 0\}$. \medskip

\noindent
\textbf{Outline of the paper:} 
In Section \ref{sec:model_categories_and_homotopy_theory}, we prove Theorem \ref{thm:pseudo-sii-locally} and, in Section \ref{sec:Mackey_functors}, we prove Theorems \ref{thm:Mackey_is_acyclic} and \ref{thm:MackeySSets_nice_hocolim}. Posets of subgroups and the Bianchi group $\Gamma_1$ are studied in Section \ref{section:poset_of_subgroups}. The dual results are presented without proof in Section \ref{section:dual}. The proof of Theorem \ref{thm:pseudo-sii-locally_over_DCC}  is postponed to Section \ref{sec:prf_main_thm}. 
\medskip

\noindent
{\bf Acknowledgments.} First author supported by Universidad de Málaga grant G RYC-2010-05663, MICINN grant BES-2017-079851, and PID2020-116481GB-I00. Both authors supported by MICINN grant PID2020-118753GB-I00. Second author supported by Junta de Andalucía grant ProyExcel00827.


\section{Model categories and homotopy theory}\label{sec:model_categories_and_homotopy_theory}

In this section, $R$ denotes a commutative ring with unit, and $\CP$ denotes a filtered poset unless stated otherwise.

\begin{thm}\label{thm:Modelcategory_Ch(Ab)}
 There exists a model category on $\Fun(\CP,\Ch(R))$ such that:
 \begin{enumerate}
     \item a natural transformation $\eta\colon F\Rightarrow G$ is a weak equivalence if and only if $\eta_p$ is a weak equivalence for every $p\in \CP$;
     \item a functor $F$ is cofibrant if only if the natural map $\colim_{\CP_{<i}} F\to F(i)$ is injective for every $i\in \CP$; and
     \item  if a functor $F\colon \CP\to R\lmod$ is cofibrant, then it is $\colim$-acylic, i.e., $\colim_n F=0$ for all $n>0$.
 \end{enumerate}
 \end{thm}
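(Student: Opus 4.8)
The plan is to realize the asserted structure as a Reedy model structure for a direct category. First I would equip the target $\Ch(R)$ with the \emph{injective} model structure, whose weak equivalences are the quasi-isomorphisms and whose cofibrations are \emph{exactly} the monomorphisms; this structure exists on unbounded complexes because $R\lmod$ is a Grothendieck abelian category. Next I would use the filtration to view $\CP$ as a direct category: since $\CP$ satisfies $\DCC$ it is well-founded, so the order-preserving map $\CP\to\NN$ (refined by well-founded rank if it is not strict) serves as a degree function in which every non-identity morphism strictly raises degree. Consequently the inverse part of $\CP$ is discrete and all matching objects are terminal, i.e. zero.

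With $\CP$ a direct Reedy category and $\Ch(R)$ a model category, the Reedy model structure on $\Fun(\CP,\Ch(R))$ exists, and I would read off the three claims from it. Its weak equivalences are objectwise by definition, and objectwise weak equivalences in the injective structure are objectwise quasi-isomorphisms, which gives (1). Because the matching objects vanish, the Reedy fibrations are precisely the objectwise fibrations, and the Reedy cofibrant objects are those for which every relative latching map $L_iF\to F(i)$ is a cofibration in $\Ch(R)$. Identifying the latching category of $i$ with the subposet $\CP_{<i}$ gives $L_iF=\colim_{\CP_{<i}}F$; since cofibrations in the injective structure are exactly the monomorphisms, $F$ is cofibrant if and only if $\colim_{\CP_{<i}}F\to F(i)$ is a monomorphism for all $i$, which for $F$ valued in $R\lmod$ concentrated in degree $0$ is injectivity. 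This is (2).

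For the $\colim$-acyclicity in (3) I would first show that $\colim\colon\Fun(\CP,\Ch(R))\to\Ch(R)$ is left Quillen: its right adjoint is the constant-diagram functor $c$, and $c$ sends (trivial) fibrations in $\Ch(R)$ to objectwise (trivial) fibrations, hence to Reedy (trivial) fibrations, so $c$ is right Quillen. Therefore $\mathbb{L}\colim F\simeq\colim F$ whenever $F$ is cofibrant, and if $F$ is concentrated in degree $0$ then $\colim F$ is too, so $H_n(\mathbb{L}\colim F)=0$ for $n>0$. It then remains to identify $H_n(\mathbb{L}\colim F)$ with the classical higher colimit $\colim_n F$. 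For this I would take a projective resolution $P_\bullet\to F$ in the abelian category $\Fun(\CP,R\lmod)$: the representable projectives $R[\Hom_\CP(j,-)]$, their direct sums, and retracts thereof all have injective latching maps, so each $P_k$ is cofibrant by (2); moreover $P_\bullet$ is Reedy cofibrant (its latching map is degreewise a monomorphism, hence a cofibration in $\Ch(R)$) and $P_\bullet\to F$ is an objectwise quasi-isomorphism because evaluation is exact. Thus $P_\bullet\to F$ is a cofibrant replacement, whence $H_n(\mathbb{L}\colim F)=H_n(\colim P_\bullet)=\colim_n F$; combined with $\mathbb{L}\colim F\simeq\colim F$ this forces $\colim_n F=0$ for $n>0$.

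The main obstacle is twofold. The first point requiring care is that the Reedy machinery genuinely applies over an arbitrary well-founded, possibly not finite-to-one, filtered poset: the factorization axioms demand a transfinite construction of functorial factorizations by induction on the filtration degree (a small-object-type argument), and one must confirm that the latching category of $i$ is exactly $\CP_{<i}$ and that the injective model structure on unbounded $\Ch(R)$ behaves as expected. The second and more delicate point is the comparison in (3) between the homotopical $\mathbb{L}\colim$ and the algebraic derived functors $\colim_n$; its crux is verifying that a projective resolution in the functor category really is a cofibrant replacement in $\Fun(\CP,\Ch(R))$, which reduces to checking the latching-injectivity of (2) for free and representable diagrams.
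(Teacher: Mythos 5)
Your proposal is correct and follows essentially the same route as the paper: the Reedy model structure on the direct category $\CP$ (with discrete inverse part) over the injective model structure on $\Ch(R)$, the identification of latching objects with $\colim_{\CP_{<i}}F$, and the Quillen adjunction $\colim\dashv\Delta$ giving acyclicity of cofibrant objects. The only difference is that you spell out the comparison of $\mathbb{L}\colim$ with the classical derived functors via a projective resolution, which the paper dispatches in one sentence by noting that $\Fun(\CP,\Ch(R))$ has enough projectives.
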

 \begin{proof}
We see $\CP$ as Reedy category, see \cite[Section 15]{Hirschhorn2003} for background, by setting $\direct\CP=\CP$ and $\invers{\CP}$ equal to the discrete category $\CP^0$, and we endow $\Ch(R)$ with the injective model category structure, see \cite[Theorem 18.5.4]{MAYPONTO2011}. Hence, the weak equivalences and the cofibrations in $\Ch(R)$ are, respectively, the quasi-isomorphisms and the monomorphisms. Therefore, by the result of D. M. Kan \cite[Theorem 15.3.4]{Hirschhorn2003}, we can equip $\Fun(\CP,\Ch(R))$ with a model category structure satisfying points (1) and (2) of the statement and the following,
\begin{equation*}
\text{ $\eta\colon F\Rightarrow G$ is a fibration if and only if  $\eta_p$ is a fibration in $\Ch(R)$ for every $p\in \CP$.}
\end{equation*}
Now, the adjoint pair, $\colim_\CP\colon \Fun(\CP,\Ch(R))\Leftrightarrow \Ch(R)\colon \Delta$, where $\Delta$ is the diagonal functor, is a Quillen pair as, by the comments above, $\Delta$ preserves acyclic fibrations and fibrations. Thus, by \cite[Lemma 8.5.9]{Hirschhorn2003}, the total left derived functor $\Ho(\Fun(\CP,\Ch(R)))\to \Ho(\Ch(R))$ exists and takes $[F]$ to $[\colim (QF)]$, where ${Q}F$ is a cofibrant replacement of $F$. As there exist enough projectives in $\Fun(\CP,\Ch(R))$, we recover its usual $i$-th left derived functors as ${\colim_i F=H_i(\colim QF)}$. If $F$ is cofibrant, we may choose $QF=F$ so that $QF$ is concentrated in degree $0$ and hence $H_i(\colim QF)=0$ for $i>0$. This proves (3) of the statement.
\end{proof}

\noindent\textbf{\cref{thm:pseudo-sii-locally}}
\textit{Let $\CP$ be a filtered poset and $F\colon \CP\to R\lmod$ be a functor. Then $F$ is cofibrant if and only if $F$ is pseudo-projective. Moreover, in this case, $F$ is $\colim$-acyclic.}\medskip

\begin{proof}
    Let $F\colon \CP\to R\lmod$ be a functor. By \cref{thm:pseudo-sii-locally_over_DCC}, $F$ is pseudo projective if and only if, for every $i\in \CP$, the natural map
    $
    \colim_{\CP_{<i}} F\to F(i)
    $
    is injective. \cref{thm:Modelcategory_Ch(Ab)} completes the proof.
\end{proof}

Next, we prove a version of Theorem \ref{thm:Modelcategory_Ch(Ab)} for the category $\SSets$ of simplicial sets.

\begin{thm}\label{thm:Modelcategory_SSets}
 If $\CP$ is a filtered poset, there exists a model category on $\Fun(\CP,\SSets)$ such that 
 \begin{enumerate}
     \item a natural transformation $\eta\colon F\Rightarrow G$ is a weak equivalence if and only if $\eta_p$ is a weak equivalence for every $p\in \CP$;
     \item a functor $F$ is cofibrant if only if the natural map $\colim_{\CP_{<i}} F\to F(i)$ is injective for every $i\in \CP$; and
     \item a cofibrant functor $F\colon \CP\to \SSets$ satisfies that $\hocolim_\CP F\simeq \colim_\CP F$.
 \end{enumerate}
 \end{thm}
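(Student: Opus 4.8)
The plan is to follow the proof of \cref{thm:Modelcategory_Ch(Ab)} almost verbatim, replacing the target category $\Ch(R)$ by $\SSets$. First I would regard $\CP$ as a Reedy category exactly as before, with $\direct{\CP}=\CP$ and $\invers{\CP}=\CP^0$ the discrete category, the degree function being the filtration $\CP\to\NN$. Then I would equip $\SSets$ with its standard Kan--Quillen model structure, in which the weak equivalences are the weak homotopy equivalences, the cofibrations are the monomorphisms, and the fibrations are the Kan fibrations; in particular every object of $\SSets$ is cofibrant. Kan's theorem \cite[Theorem 15.3.4]{Hirschhorn2003} then produces the Reedy model structure on $\Fun(\CP,\SSets)$, whose weak equivalences are detected objectwise, giving point (1).

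For (2), the latching category at $i\in\CP$ is $\CP_{<i}$, so the latching object is $L_iF=\colim_{\CP_{<i}}F$, and $\emptyset\to F$ is a Reedy cofibration precisely when each latching map $\colim_{\CP_{<i}}F\to F(i)$ is a cofibration in $\SSets$, i.e. a monomorphism, i.e. injective. Because $\invers{\CP}$ is discrete, every matching category is empty and every matching object is the terminal simplicial set; consequently the relative matching maps collapse and a natural transformation is a Reedy fibration if and only if it is objectwise a Kan fibration. As in the $\Ch(R)$ case, this shows that the diagonal $\Delta$ preserves fibrations and acyclic fibrations, so the adjoint pair $\colim_\CP\colon\Fun(\CP,\SSets)\rightleftarrows\SSets\colon\Delta$ is a Quillen pair and $\colim_\CP$ is left Quillen.

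For (3), the total left derived functor $L\colim_\CP$ exists by \cite[Lemma 8.5.9]{Hirschhorn2003} and sends $[F]$ to $[\colim_\CP(QF)]$ for a Reedy cofibrant replacement $QF$. When $F$ is itself cofibrant we may take $QF=F$, so $L\colim_\CP F\simeq\colim_\CP F$. It then remains to identify $L\colim_\CP F$ with the homotopy colimit $\hocolim_\CP F$, and this identification is the step I expect to be the main obstacle, since $\hocolim$ is ordinarily computed by the Bousfield--Kan formula or via the projective model structure, whereas the cofibrant replacement produced here is Reedy.

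I would resolve this by observing that the projective and Reedy model structures on $\Fun(\CP,\SSets)$ share the same (objectwise) weak equivalences, and that $\colim_\CP$ is left Quillen for both; since a total left derived functor is the right Kan extension along the localization at the weak equivalences, it depends only on that class of weak equivalences, so the two computations of $L\colim_\CP$ agree. As the projective one is by definition $\hocolim_\CP$, we conclude $\hocolim_\CP F\simeq\colim_\CP F$ for cofibrant $F$. Alternatively, one can simply invoke the standard fact that a Reedy cofibrant diagram computes its homotopy colimit.
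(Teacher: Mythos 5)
Your proposal is correct and follows essentially the same route as the paper: the Reedy structure on $\Fun(\CP,\SSets)$ with $\direct{\CP}=\CP$, $\invers{\CP}=\CP^0$ via Kan's theorem, then the Quillen pair $\colim_\CP \dashv \Delta$ and the total left derived functor. The only difference is that you explicitly justify identifying the Reedy-derived colimit with $\hocolim_\CP$ (via agreement with the projective structure), a point the paper leaves implicit by simply naming the derived functor $\hocolim_\CP$; your added care here is sound and harmless.
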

\begin{proof}
We consider $\CP$ as a Reedy category by choosing $\direct\CP=\CP$ and $\invers{\CP}=\CP^0$. In $\SSets$, we consider the usual model category structure \cite[11.1]{Dwyer1995}: the weak equivalences are those simplicial maps whose topological realization is a weak homotopy equivalence, cofibrations are the (degreewise) injections, and the fibrations are the Kan fibrations. Again by  \cite[Theorem 15.3.4]{Hirschhorn2003}, we equip $\Fun(\CP,\SSets)$ with a model category structure satisfying points (1) and (2) of the statement and the following,
\begin{equation*}
\text{ $\eta\colon F\Rightarrow G$ is a fibration if and only if $\eta_p$ is a fibration in $\SSets$ for every $p\in \CP$.}
\end{equation*}
Now, the adjoint pair, $\colim_\CP\colon \Fun(\CP,\SSets)\leftrightarrow \SSets\colon \Delta$, where $\Delta$ is the diagonal functor, is a Quillen pair as $\Delta$ preserves acyclic fibrations and fibrations. By \cite[Lemma 8.5.9]{Hirschhorn2003}, the total left derived functor $\hocolim_\CP\colon \Ho(\Fun(\CP,\SSets))\to \Ho(\SSets))$ exists and maps $[F]$ to $[\colim (QF)]$, where $F\colon \CP\to \SSets$ and $QF$ is its cofibrant replacement. If $F$ is cofibrant we may choose $QF=F$ so that $\hocolim_\CP F\simeq \colim_\CP F$, obtaining (3) of the statement. 
\end{proof}

\section{Weak Mackey functors}\label{sec:Mackey_functors}

In this section, $\CP$ denotes a filtered poset and we weaken Definition \ref{def:MackeyfunctorCC} for $\CC=R\lmod$ and $\CC=\SSets$. We start by defining a certain class of morphisms that commute with a given functor.

\begin{defn}\label{def:F-linearCC}
Let $F\colon \CP \to \CC$ be a functor between a poset $\CP$ and a category $\CC$, and $i\in \CP$. We say that $\alpha\in \End_\CC(F(i))$  ($\alpha\in \Aut_\CC(F(i))$) is \emph{$F$-linear} if for all $j<i$
\[
\alpha\circ F(j<i)=F(j<i)\circ \beta\]
for some $\beta\in \End_\CC(F(j))$ ($\beta\in \Aut_\CC(F(j))$). We denote by $\End^F_\CC(i)$ ($\Aut^F_\CC(i)$) the submonoid (subgroup) of $F$-linear endomorphisms (automorphisms) of $F(i)$.
\end{defn}

Note that the cases $k=j$ and $k<j$ in Definition \ref{def:MackeyfunctorCC} imply that $\alpha(i,i,j)$ is $F$-linear. Moreover, for $\CC=R\lmod$, and $j\not\leq k$, we have $k\wedge j\neq j$ and  $G(j<i)\circ F(k<i)$ lands in $\im_F(j)$. This fact and earlier comments motivate the following definition, where we recall that the meet-semilattice condition is dropped.
 
\begin{defn}\label{def:weakMackeyfunctorAb}
Let $\CP$ be a filtered poset and let $F\colon \CP\to R\lmod$ a covariant functor. We say that $F$ is a \emph{weak Mackey functor} if for all $j<i$  there exists a morphism in $R\lmod$, $G(j<i)\colon F(i)\to F(j)$, such that $G(j<i)\circ F(j<i)=\alpha(i,j)$ with $\alpha(i,j)\in \End^F_{R\lmod}(j)$, and, for $k<i$, $j\not\leq k$, 
\[
\im(G(j<i)\circ F(k<i))\subseteq \im_F(j).
\]
We say that $F$ has a \emph{quasi-unit} if $\alpha(i,j)\in \Aut^F_{R\lmod}(j)$.
\end{defn}

\begin{rmk}
Notice that the covariant part of a Mackey functor over $R\lmod$ is a weak Mackey functor.
\end{rmk}

\noindent\textbf{\cref{thm:Mackey_is_acyclic}}
\textit{Let  $F\colon \CP\to R\lmod$ be a weak Mackey functor with a quasi-unit (or the covariant part of a Mackey functor with a quasi-unit). Then $F$ is $\colim$-acyclic.}\medskip

The proof of this theorem is reduced to prove the following lemma.

\begin{lem}\label{lem:Mackey_Ab_implies_cofibrant}
If $F\colon \CP\to R\lmod$ is a weak Mackey functor with a quasi-unit, then $F$ is cofibrant.
\end{lem}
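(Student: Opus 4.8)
The plan is to reduce the lemma to pseudo-projectivity. Indeed, filtered posets satisfy $\DCC$, so by \cref{thm:pseudo-sii-locally_over_DCC} a functor $F$ is pseudo-projective exactly when $\colim_{\CP_{<i}}F\to F(i)$ is injective for every $i$, and by \cref{thm:Modelcategory_Ch(Ab)}(2) this injectivity is precisely cofibrancy. Thus it suffices to show that a weak Mackey functor $F$ with a quasi-unit is pseudo-projective at every $i\in\CP$. So I would fix $i\in\CP$, a finite subset $J\subseteq\CP_{\le i}$, and elements $x_j\in F(j)$ satisfying $\sum_{j\in J}F(j<i)(x_j)=0$, and aim to prove $x_j\in\im_F(j)$ for each $j\in\max J$.

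Fix such a maximal $j$. If $j=i$, then $i$ is forced to be the unique maximal element of $J$, and isolating the $i$-term in the relation gives $x_i=-\sum_{k\in J,\,k\ne i}F(k<i)(x_k)$, a sum of elements of $\im F(k<i)\subseteq\im_F(i)$; hence $x_i\in\im_F(i)$, with no use of the Mackey structure. The substantive case is $j<i$. Here maximality forces $i\notin J$ (an $i\in J$ would strictly dominate $j$), so every $k\in J$ satisfies $k<i$, and for $k\ne j$ maximality also gives $j\not\le k$. I would then apply the transfer $G(j<i)$ of the weak Mackey structure to the relation:
\[
0=\sum_{k\in J}G(j<i)\circ F(k<i)(x_k)=\alpha(i,j)(x_j)+\sum_{k\in J,\,k\ne j}G(j<i)\circ F(k<i)(x_k).
\]
By the quasi-unit hypothesis $\alpha(i,j)=G(j<i)\circ F(j<i)\in\Aut^F_{R\lmod}(j)$, while the image condition of \cref{def:weakMackeyfunctorAb} places every remaining summand in $\im_F(j)$. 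Consequently $\alpha(i,j)(x_j)\in\im_F(j)$.

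The last step is to invert $\alpha(i,j)$ while staying inside $\im_F(j)$, and this is the technical heart of the argument. Since $\alpha(i,j)$ is an $F$-linear automorphism, for each $k<j$ there is an automorphism $\beta$ of $F(k)$ with $\alpha(i,j)\circ F(k<j)=F(k<j)\circ\beta$; surjectivity of $\beta$ then yields $\alpha(i,j)(\im F(k<j))=\im F(k<j)$, and summing over $k<j$ gives $\alpha(i,j)(\im_F(j))=\im_F(j)$, whence $\alpha(i,j)^{-1}(\im_F(j))=\im_F(j)$. Applying $\alpha(i,j)^{-1}$ to $\alpha(i,j)(x_j)\in\im_F(j)$ produces $x_j\in\im_F(j)$, completing the verification of pseudo-projectivity. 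I expect the only genuine obstacle to be this $F$-linearity step: it is precisely why one needs $\alpha(i,j)$ to be an \emph{$F$-linear} automorphism (the quasi-unit condition) rather than merely an invertible endomorphism, since $F$-linearity is exactly what guarantees that $\alpha(i,j)$ and its inverse preserve the submodule $\im_F(j)$. The remaining care is the maximality bookkeeping ensuring $i\notin J$ and $j\not\le k$, which is routine.
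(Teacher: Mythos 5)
Your proposal is correct and follows essentially the same route as the paper's proof: reduce cofibrancy to pseudo-projectivity via Theorems \ref{thm:pseudo-sii-locally_over_DCC} and \ref{thm:Modelcategory_Ch(Ab)}, apply the transfer $G(j<i)$ to the relation, use the image condition of Definition \ref{def:weakMackeyfunctorAb} on the cross terms, and undo $\alpha(i,j)$ via $F$-linearity to stay inside $\im_F(j)$. Your explicit handling of the edge case $j=i$ and of the maximality bookkeeping ($i\notin J$, $j\not\le k$) is slightly more careful than the paper's, but the argument is the same.
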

\begin{proof}
By Theorem \ref{thm:pseudo-sii-locally}, this is equivalent to prove that $F$ is pseudo-proyective. So let $i\in \CP$, $J\subset \CP_{\le i}$, $J$ finite, and $\oplus_{j\in J}x_j\in\bigoplus_{j\in J} F(j)$ such that 
\[
\sum_{j\in J} F(j<i)(x_j)=0.
\]
For $k\in \max J$, we want to prove that $x_k\in \im_F(k)$. Applying the morphism $G(k<i)$ to the equation above we get
\begin{align*}
0&=G(k<i)(\sum_{j\in J} F(j<i)(x_j))=\sum_{j\in J} (G(k<i)\circ F(j<i))(x_j)\\
&=(G(k<i)\circ F(k<i)(x_k)+\sum_{j\in J,\, k\not\leq j} (G(k<i)\circ F(j<i))(x_j),
\end{align*}
where the first summand corresponds to $j=k$. Now, as $F$ is weak Mackey, this summand is equal to $\alpha(i,k)(x_k)$, and every other other summand belongs to $\im_F(k)$ by Definition \ref{def:weakMackeyfunctorAb}, so that
\[
\alpha(i,k)(x_k) =\sum_{l<k} F(l<k)(y_l)
\]
for some elements $y_l\in F(l)$. As $\alpha(i,k)$ is invertible and $F$-linear, we can solve for $x_k$ as follows, 
\[
x_k =\sum_{l<k} (\alpha(i,k)^{-1}\circ F(l<k))(y_l)=\sum_{l<k} (F(l<k)\circ \beta_l)(y_l)
\]
for some automorphisms $\beta_l\in \Aut_{R\lmod}(F(l))$. Hence $x_k\in \im_F(k)$ and we are done.
\end{proof}

By dropping the contravariant functoriality in Definition \ref{def:MackeyfunctorCC}, we obtain the following weaker notion for $\CC=\SSets$.

\begin{defn}
\label{def:weakMackeyfunctorSSets}
For $\CP$ a filtered meet-semilattice and $F\colon \CP\to \SSets$ a covariant functor. We say that $F$ is a \emph{weak Mackey functor} if there exists a collection of morphisms, 
\[
\{G(j<i)\colon F(i)\to F(j)\}_{j<i},
\]
such that for all $j<i,k<i$ there exist 
\[
\alpha(i,j,k)\in \End_\SSets(F(j))\text{ and }\beta(i,j,k)\in \End_\SSets(F(k\wedge j))
\]
with
\begin{align*}
G(j<i)\circ F(k<i)=&\alpha(i,j,k) \circ F(k\wedge j<j)\circ G(k\wedge j<k)\\ 
=&  F(k\wedge j<j)\circ \beta(i,j,k)\circ G(k\wedge j<k).
\end{align*}
We say that $F$ has a \emph{quasi-unit} if $\alpha(i,j,j)\in \Aut_\SSets(F(j))$ for all $j<i$.
\end{defn}

\begin{rmk}\label{rmk:MackeyonSSets}
Notice that the covariant part of a Mackey functor over $\SSets$ is a weak Mackey functor and that $\alpha(i,i,j)$ in Definition \ref{def:weakMackeyfunctorSSets} is $F$-linear. Moreover, for any functor $H\colon \SSets\to R\lmod$ and any weak Mackey functor $F\colon \CP\to \SSets$, the composition $H\circ F\colon \CP\to R\lmod$ is a weak Mackey functor.
\end{rmk}

\begin{rmk}\label{rmk:Mackeyfunctor_is_monic}
If $F\colon \CP\to \CC$ is a weak Mackey functor for  $\CC=R\lmod$ or $\CC=\SSets$ and it has a quasi-unit, then $F$ is \emph{monic}, i.e., $F(i<j)$ is injective for all $i<j$. This can be easily seen by considering the case $k=j$ in Definition \ref{def:weakMackeyfunctorAb} or \ref{def:weakMackeyfunctorSSets}. 
\end{rmk}

\noindent\textbf{\cref{thm:MackeySSets_nice_hocolim}}
\textit{Let $F\colon \CP\to \SSets$ be a weak Mackey functor with a quasi-unit (or the covariant part of a Mackey functor with a quasi-unit). Then $\hocolim_\CP F\simeq \colim_\CP F$ and $H_*(\hocolim_\CP F)=\colim H_*(F)$.}\medskip

We first prove the next auxiliary lemma.

\begin{lem}
\label{lem:MackeyfunctorSSets_is_cofibrant}
If $F\colon \CP\to \SSets$ is a weak Mackey functor with a quasi-unit, then $F$ is cofibrant.
\end{lem}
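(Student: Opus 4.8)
The plan is to reduce cofibrancy, via \cref{thm:Modelcategory_SSets}(2), to verifying that the natural map $\colim_{\CP_{<i}} F\to F(i)$ is injective for every $i\in \CP$, and then to prove this injectivity directly using the retraction supplied by the quasi-unit together with the meet-semilattice structure. This is the simplicial-set analogue of \cref{lem:Mackey_Ab_implies_cofibrant}, but since $\SSets$ is not abelian I cannot manipulate kernels and images; instead I would argue degreewise, exploiting that colimits and injectivity of simplicial maps are both detected levelwise (in each simplicial degree $n$, on the underlying sets).

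First I would fix $i\in\CP$ and work in a fixed simplicial degree, so the question becomes whether two elements $x\in F(j)_n$ and $x'\in F(j')_n$ (for $j,j'<i$) that have the same image under $F(j<i)$ and $F(j'<i)$ already become identified inside $\colim_{\CP_{<i}} F$. The colimit over $\CP_{<i}$ identifies $x$ and $x'$ precisely when they are connected by a zig-zag of structure maps; because $\CP$ is a meet-semilattice, $j\wedge j'<i$ exists and gives a canonical lower bound, so the essential content is to show that if $F(j<i)(x)=F(j'<i)(x')$ then $x$ and $x'$ have a common image in $F(j\wedge j')$. Applying $G(j\wedge j' < i)$ is not directly available, so instead I would apply $G(j<i)$ to the common value and use the defining relation of \cref{def:weakMackeyfunctorSSets} with the pair $(j,j')$: the composite $G(j<i)\circ F(j'<i)$ factors through $F(j\wedge j'<j)$, and similarly $G(j<i)\circ F(j<i)=\alpha(i,j,j)$ is, by the quasi-unit hypothesis and \cref{rmk:MackeyonSSets}, an $F$-linear automorphism of $F(j)$. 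Composing with its inverse then expresses $x$ (up to this automorphism, which commutes with the structure map to $F(i)$) as the image of an element coming from $F(j\wedge j')$, which is exactly the identification needed in the colimit.

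Concretely, the key steps in order are: (i) invoke \cref{thm:Modelcategory_SSets}(2) to reduce to levelwise injectivity of $\epsilon\colon\colim_{\CP_{<i}}F\to F(i)$; (ii) use the meet-semilattice property to reduce an arbitrary coincidence in $\colim_{\CP_{<i}}F$ to coincidences between elements indexed at pairs $j,j'$ with meet $j\wedge j'$; (iii) apply $G(j<i)$ and the two displayed factorizations of \cref{def:weakMackeyfunctorSSets} to route the relevant element through $F(k\wedge j<j)$; and (iv) invert the $F$-linear automorphism $\alpha(i,j,j)$, using $F$-linearity so that the correction stays compatible with the maps into the colimit, to conclude that the two representatives agree in $\colim_{\CP_{<i}}F$, whence $\epsilon$ is injective.

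The main obstacle I anticipate is organizing the zig-zag bookkeeping in the colimit without the convenience of additive cancellation: in the abelian proof one simply applies $G(k<i)$ to a single equation $\sum_j F(j<i)(x_j)=0$ and isolates a summand, but over $\SSets$ there is no sum to split, so I must instead track how elements are glued in the colimit and show that the $F$-linear automorphism $\alpha(i,j,j)$ can be absorbed compatibly along the structure maps. The crucial leverage is precisely the $F$-linearity recorded in \cref{rmk:MackeyonSSets}, which guarantees that $\alpha(i,j,j)^{-1}$ interchanges with $F(l<j)$ up to another automorphism, so that undoing $\alpha$ does not destroy the identifications already made in $\colim_{\CP_{<i}}F$; making this interchange precise at the level of colimit representatives is where the real care is needed.
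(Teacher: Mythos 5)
Your plan follows the paper's proof almost step for step: reduce to degreewise injectivity of $\epsilon\colon\colim_{\CP_{<i}}F\to F(i)$ via \cref{thm:Modelcategory_SSets}(2), take two representatives $x\in F(j)$ and $x'\in F(j')$ in a fixed simplicial degree with $\epsilon([x])=\epsilon([x'])=y$, compute $G(j<i)(y)$ in two ways, use the factorization $G(j<i)\circ F(j'<i)=F(j\wedge j'<j)\circ\beta(i,j,j')\circ G(j\wedge j'<j')$, and invert the $F$-linear automorphism $\alpha(i,j,j)$ to obtain $x=F(j\wedge j'<j)(x'')$ for some $x''\in F(j\wedge j')$. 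All of that is correct and is exactly what the paper does.

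There is, however, a gap at the point where you declare this to be ``exactly the identification needed in the colimit.'' It is not: it only shows $[x]=[x'']$. The symmetric argument produces a possibly different element $x'''\in F(j\wedge j')$ with $x'=F(j\wedge j'<j')(x''')$, hence $[x']=[x''']$; but nothing in the axioms forces $F(j\wedge j'<j')(x'')=x'$, because $F(j\wedge j'<j')\circ\beta(i,j,j')\circ G(j\wedge j'<j')$ need not be the identity of $F(j')$ (the axioms only control $G\circ F$, not $F\circ G$). So after your step (iv) you have two one-step zig-zags $x\sim x''$ and $x'\sim x'''$ that do not yet connect $x$ to $x'$. The paper closes this by observing that $F(j\wedge j'<i)(x'')=F(j<i)(x)=y=F(j'<i)(x')=F(j\wedge j'<i)(x''')$ and that $F(j\wedge j'<i)$ is injective, since a weak Mackey functor with a quasi-unit is monic (\cref{rmk:Mackeyfunctor_is_monic}, i.e.\ the case $k=j$ of the axioms); hence $x''=x'''$ and $[x]=[x'']=[x''']=[x']$. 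With that one extra use of monicity your argument is complete; without it the proof does not go through.
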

\begin{proof}
Let $i\in \CP$. We want to check that the map of simplicial sets $\epsilon\colon\colim_{\CP_{<i}} F\to F(i)$ is (degreewise) injective. So choose a degree $n$ and denote, by abusing notation, the $n$-simplices of $F$ and $G$ by the same letters, respectively. Recall that 
\[
\colim_{\CP_{<i}} F=\bigcup_{j<i} F(j)/\sim,
\]
where $x\sim F(k<j)(x)$ for $k<j<i$. We denote by $[x]$ the image by $F(j)\to\colim_{\CP_{<i}} F$ of $x\in F(j)$. The natural map $\epsilon$ is defined by $\epsilon([x])=F(j<i)(x)$ if $x\in F(j)$. Assume that $y=\epsilon([x])=\epsilon([x'])$ for $x\in F(j)$, $x'\in F(j')$. Applying the definition of Mackey functor we have then
\begin{align*}
G(j<i)(y)&=G(j<i)(F(j<i))(x))=\alpha(i,j,j)(x),\text{ and}\\
G(j<i)(y)&=G(j<i)(F(j'<i))(x'))=\alpha(i,j,j')(F(j\wedge j'<j)(G(j\wedge j'<j')(x'))).
\end{align*}
Hence we have
\begin{align*}
\alpha(i,j,j)(x)&=\alpha(i,j,j')(F(j\wedge j'<j)(G(j\wedge j'<j')(x')))\\
&=F(j\wedge j'<j)(\beta(i,j,j')(G(j\wedge j'<j')(x')))
\end{align*}
and, as $\alpha(i,j,j)$ is an $F$-linear automorphism, we get to
\[
x=F(j\wedge j'<j)(x'')
\]
for some $x''\in F(j\wedge j')$. In particular, 
\[
F(j\wedge j'<i)(x'')=F(j<i)(F(j\wedge j'<j)(x''))=F(j<i)(x)=y.
\]
Switching the roles of $x$ and $x'$ we obtain $x'''\in F(j\wedge j')$ with ${F(j\wedge j'<i)(x''')=y}$. As $F(j\wedge j'<i)$ is injective, see Remark \ref{rmk:Mackeyfunctor_is_monic}, we get that $x''=x'''$. So we have ${x\sim x''=x'''\sim x'}$, i.e., $[x]=[x']$.
\end{proof}

\begin{proof}[Proof of \cref{thm:MackeySSets_nice_hocolim}]
By Remark \ref{rmk:MackeyonSSets}, we may assume that $F$ is a weak Mackey functor. Then the fact that $\hocolim_\CP F\simeq \colim_\CP F$ is a consequence of Theorem \ref{thm:Modelcategory_SSets}(3) and Lemma \ref{lem:MackeyfunctorSSets_is_cofibrant}. In addition, by the same remark, for all $n\geq 0$, $H_n\circ F$ is a weak Mackey functor with codomain $R\lmod$, where $H_n$ is homology in degree $n$. Applying then Theorem \ref{thm:Mackey_is_acyclic} we get that $\colim_i (H_n\circ F)=0$ for $i>0$. Then the Bousfield-Kan homology spectral sequence for the homotopy colimit $\hocolim_\CP F$ collapses and gives the statement that $H_*(\hocolim_\CP F)=\colim H_*(F)$.
\end{proof}
\begin{exm}
\label{exm:Davis-Januszkiewicz}
One example of Mackey functors for posets are twin functors for the face poset of a simplicial complex. For any simplicial complex $K$ and its face poset $\CAT(K)$, the twin functors $X^K\colon \CAT(K)\to \Top$ and $X_K\colon \CAT(K)^\op\to \Top$ in \cite[Definition 2.6]{NotbohmRay_DJ} are a Mackey functor in the sense of Definition \ref{def:MackeyfunctorCC} with a quasi-unit and with $\alpha(i,j,k)=1_{F(j)}$ for all $j<i,k<i$ of $\CAT(K)$. Hence, \cite[Equation 2.8]{NotbohmRay_DJ},
\[
\hocolim X^K\to \colim X^K\mbox{ is a homotopy equivalence},
\]
is a direct consequence of \cref{thm:MackeySSets_nice_hocolim}.
\end{exm}


\section{Poset of subgroups}\label{section:poset_of_subgroups}

In this section, $G$ denotes a group and $\CP$ the inclusion poset of a collection of subgroups of $G$, subject to the following condition,
\begin{equation}\label{equ:hypothesis_on_subgroup_poset}
1\in \CP\text{ and }\{U\hookrightarrow G\}_{U\in \CP}\text{ is the cone to }G\cong\colim_{U\in \CP} U.
\end{equation}
Under these conditions,  the arguments in the proof \cite[Theorem 5.1]{DiazRamos2009} give a fibration
\begin{equation}\label{equ:fibration_to_BGamma}
F\to \hocolim_{U\in \CP} B(U)\to B(G),
\end{equation}
where $F$ is simply connected and $H_n(F;\ZZ)=\colim_{n-1} H$ for $n\geq 2$. The functor ${H\colon \CP\to \Ab}$ takes the following value on $U\in \CP$,
\[
H(U)=\{\sum_{g\in G} n_g\cdot g\in \ZZ[G]\mid \sum_{u\in U} n_{ug}=0\text{ for all $g\in G$}\},
\]
and takes morphisms to inclusions. Note that if $H$ is acyclic then $F\simeq *$ and we have a homotopy equivalence between the homotopy colimit and $B(G)$ in \eqref{equ:fibration_to_BGamma}. The following result gives conditions on the subgroups in the ray $P_{\le U}$ that are sufficient for $H$ to be pseudo-projective at $U$.

\begin{prop}\label{prop:H_pseusoprojective_at_U}
If for every finite subset $J\subset \CP_{\le U}$, and setting $M=\max J$, the subgroup $\langle V\mid V\in M\rangle$ is the free product of the subgroups $\{V\}_{V\in M}$ with amalgamated subgroup $\bigcap_{V\in M} V\in P$, then $H$ is pseudo-projective at $U$.
\end{prop}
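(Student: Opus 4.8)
The plan is to reinterpret $H$ through relative augmentation ideals. For a subgroup $V\le G$ one checks directly that $H(V)=I_V\ZZ[G]$, where $I_V=\ker(\ZZ[V]\to\ZZ)$: indeed $H(V)$ is precisely the kernel of the projection $\ZZ[G]\to\ZZ[V\backslash G]$ onto the free module on the right cosets, and that kernel is spanned by the elements $(v-1)g$ with $v\in V$, $g\in G$. Under this identification the structure maps $H(k<j)$ are the inclusions $I_k\ZZ[G]\subseteq I_j\ZZ[G]$ and $\im_H(j)=\sum_{k<j}I_k\ZZ[G]$. Now fix a finite $J\subseteq\CP_{\le U}$, put $M=\max J$, and let $\oplus_{j\in J}x_j$ satisfy $\sum_{j\in J}x_j=0$ in $\ZZ[G]$. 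Since $J$ is finite, each non-maximal $j$ lies below some $\mu(j)\in M$; absorbing $x_j$ into a term indexed by $\mu(j)$ and using $H(j)\subseteq H(\mu(j))$, I would rewrite the relation as $\sum_{V\in M}z_V=0$ with $z_V\in H(V)$ and $x_V-z_V\in\sum_{k<V}H(k)=\im_H(V)$. Thus, for a fixed maximal $V_0$, it suffices to prove $z_{V_0}\in\im_H(V_0)$.

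Write $W=\bigcap_{V\in M}V$, which by hypothesis lies in $\CP$ and, when $|M|\ge 2$, is strictly below $V_0$ (equality would force $V_0\subseteq V$ for all $V\in M$, contradicting maximality). The target is $z_{V_0}\in H(W)$, which then gives $z_{V_0}\in\im_H(V_0)$ because $H(W)\subseteq\im_H(V_0)$. (When $|M|=1$ the element $V_0$ is the unique maximum of $J$, every other $x_j$ already lies in $\sum_{k<V_0}H(k)$, and the relation gives $x_{V_0}\in\im_H(V_0)$ at once, so the tree argument below is not needed.) From $\sum_V z_V=0$ I get $z_{V_0}=-\sum_{V\ne V_0}z_V\in H(V_0)\cap\sum_{V\ne V_0}H(V)$, so the whole statement reduces to the purely ring-theoretic identity
\[
H(V_0)\cap\sum_{V\in M,\,V\ne V_0}H(V)=H(W).
\]

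To prove this I would first reduce to $G_0=\langle V\mid V\in M\rangle$. Choosing right coset representatives $\{t_c\}$ of $G_0\backslash G$ gives $\ZZ[G]=\bigoplus_{c}\ZZ[G_0]t_c$, and every subgroup involved (all $V\in M$, all $k<V_0$, and $W$) lies in $G_0$, so each $I_V\ZZ[G]$ splits as $\bigoplus_c (I_V\ZZ[G_0])t_c$; hence the relation and the claimed inclusions may be checked one coset at a time, and after the identification $\ZZ[G_0]t_c\cong\ZZ[G_0]$ the identity becomes the same statement inside $\ZZ[G_0]$. The amalgamation hypothesis is exactly what lets $G_0$ act on its Bass--Serre tree $T$. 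The core is the two-factor case $G_0=A\ast_W B$: the augmented simplicial chain complex of $T$ is the exact sequence
\[
0\to\ZZ[W\backslash G_0]\xrightarrow{\ \partial\ }\ZZ[A\backslash G_0]\oplus\ZZ[B\backslash G_0]\to\ZZ\to 0,\qquad \partial(Wg)=Ag-Bg,
\]
exactness coming from $T$ being a tree. Since $I_A\ZZ[G_0]\cap I_B\ZZ[G_0]$ is the kernel of $x\mapsto(\pi_A(x),-\pi_B(x))$, and this map factors as $\partial\circ\pi_W$ with $\partial$ injective, that intersection equals $\ker\pi_W=I_W\ZZ[G_0]$. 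For $|M|\ge 3$ I would write $G_0=V_0\ast_W B$ with $B=\ast_W(V)_{V\ne V_0}$ (associativity of amalgamation over the common $W$, giving $V_0\cap B=W$), observe $\sum_{V\ne V_0}I_V\ZZ[G_0]\subseteq I_B\ZZ[G_0]$ and $I_W\ZZ[G_0]\subseteq\sum_{V\ne V_0}I_V\ZZ[G_0]$, and squeeze the desired equality out of the two-factor identity $I_{V_0}\ZZ[G_0]\cap I_B\ZZ[G_0]=I_W\ZZ[G_0]$.

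The main obstacle is this intersection identity: the inclusion $\supseteq$ is immediate from $I_W\subseteq I_V$, whereas the inclusion $\subseteq$ is where the amalgamated-free-product hypothesis is genuinely used, through the acyclicity of the Bass--Serre tree. Everything else—the passage to augmentation ideals, the absorption of the non-maximal terms into the maximal ones, and the reduction to $G_0$ via the coset decomposition—is routine bookkeeping.
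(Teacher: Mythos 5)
Your proof is correct, but it takes a genuinely different route from the paper's. The paper performs the same initial reduction you do (absorbing the non-maximal terms so that the relation $\sum_{V\in M}x'_V=0$ involves only maximal elements), but then it adapts Lewin's combinatorial argument: each $x'_V$ is written as a sum of terms $(v-1)\epsilon g$, the elements $g$ are put in Kurosh normal form for the amalgamated product $K=\langle V\mid V\in M\rangle$, and a double induction is run on the maximal length of these normal forms and on the number of terms realizing that length. You instead identify $H(V)$ with the relative augmentation ideal $I_V\ZZ[G]$ and reduce the whole statement to the identity $I_{V_0}\ZZ[G]\cap\sum_{V\ne V_0}I_V\ZZ[G]=I_W\ZZ[G]$, which you deduce from the exactness of the augmented chain complex of the Bass--Serre tree of $V_0\ast_W B$, together with associativity of amalgamation and a squeeze between $\sum_{V\ne V_0}I_V\ZZ[G_0]$ and $I_B\ZZ[G_0]$. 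Your two-factor identity is precisely Lewin's theorem, which the paper in effect re-proves by hand in the many-factor setting; your version trades the length induction for Bass--Serre theory, is shorter, and makes transparent exactly where the amalgamation hypothesis enters (acyclicity of the tree forces $\partial$ to be injective), whereas the paper's version is elementary and self-contained. The steps you flag as routine do all go through: the coset decomposition $\ZZ[G]=\bigoplus_c\ZZ[G_0]t_c$ is compatible with every $I_V\ZZ[G]$ involved, and $W<V_0$ strictly when $|M|\ge 2$ with $W\in\CP$ by hypothesis, so $H(W)\subseteq\im_H(V_0)$ as required.
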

\begin{proof}
Given $\oplus_{V\in J}x_V\in \bigoplus_{V\in J} H(V)$ with $\sum_{V\in J} x_V=0$, we need to show that
\begin{equation}
x_V\in \im_H(V)\text{ for every }V\in M.
\end{equation}
As $J\leq M$, we may rewrite the sum $\sum_{V\in J} x_j$ as $\sum_{V\in M} x'_V$, where $x'_V\in H(V)$ for each $V\in M$. Note that $x_V -x'_V\in \im_H(V)$ for all $V\in M$ so that it is enough to show that 
\begin{equation}\label{thmE:thesis1_V}
x'_V\in \im_H(V)\text{ for every }V\in M
\end{equation}
subject to the condition
\begin{equation}\label{thmE:sum'_is_zero_V}
\sum_{V\in M} x'_V=0.    
\end{equation}

To that aim, we follow \cite[Theorem 1]{L1970} adapted to an arbitrary number $|M|$ of subgroups and we spell out the details here: We may write
\[
x'_V=\sum_l (v_V^l-1)\epsilon_V^l g_V^l,
\]
where $v_V^l\in V$, $\epsilon_V^l=\pm 1$ and $g_V^l\in G$. Define $K=\langle V\mid V\in M\rangle$. Note that, as $V\leq K$ for all $V\in J$, Equation \eqref{thmE:sum'_is_zero_V} is true on every right coset $K\setminus G$ and we may assume that the elements $g_V^l$ belong to $K$. As $K$ is the free product of the groups $\{V\}_{j\in M}$ with amalgamated subgroup $\bigcap_{V\in M} V=W\in \CP$, by \cite[Equation (6), p.30]{K1956}, every element $g$ of $K$ either belongs to $W$ or may be written as 
\[\
g=g_1g_2\ldots g_k
\]
with $g_l\in V\setminus W$ for some $V\in M$ for all $1\leq l\leq k$, and such that adjacent elements $g_l$, $g_{l+1}$ lie in different subgroups $V$. The number $k$ in the expression above is the length $l(g)$ and we set $l(g)=0$ for $g\in W$. To prove that Equation \eqref{thmE:thesis1_V} holds, we do induction on 
\[
m=\max_{V\in M,l}\{ l(g_V^l)\}\text{ and on }|N|\text{ with }N=\{g_V^l\mid l(g_V^l)=m\}.
\]
For the base case of the induction, i.e., for $m=0$, we have that $g_V^l\in W$ for all $V$ and $l$. Thus, we may write, for each $V\in M$,
\[
x'_V=\sum_{v_V\in V} n_{v_V}\cdot v_V
\]
with $\sum_{v_V\in V} n_{v_V}=0$. For any $V\in M$, Equation \eqref{thmE:sum'_is_zero_V} shows that every element $v_V\in V$ with $n_{v_V}\neq 0$ belongs at least to some other subgroup $V'$ with $V\neq V'\in M$. By the hypothesis of this proposition, we have that $V\cap V'\in \CP$ so that  $v_V\in V\cap V'$ and $v_V-1\in H(V\cap V')\leq \im_H(V)$. Thus, 
\begin{align*}
x'_V&=\sum_{v_V\in V} n_{v_V}\cdot v_V=\sum_{v_V\in V} n_{v_V}\cdot (v_V - 1 + 1 )\\
&= \sum_{v_V\in V} n_{v_V}\cdot (v_V - 1) + (\sum_{v_V\in V} n_{v_V})\cdot 1 = \sum_{v_V\in V} n_{v_V}\cdot (v_V - 1)\in \im_H(V).
\end{align*}

For the general case of the induction with $m\geq 1$, we may assume that $g_V^l\in N$ has a decomposition $g_1g_2\ldots g_k$ with $g_1\notin V$. For, if $g_1\in V$, then 
\[
(v_V^l-1)\epsilon_V^lg_V^l=(v_V^lg_1-1)\epsilon_V^l g_2\ldots g_k-(g_1-1)\epsilon_V^lg_2\ldots g_k
\]
and we have found an expression for $x'_V$ with fewer summands of length $m$. We may also assume, for $g_V^l\in N$, that $v_V^l\notin W$ as, otherwise, we may write
\[
z_V=(v_V^l-1)\epsilon_V^lg_V^l=(v_V^lg_1-1)\epsilon_V^l g_2\ldots g_k-(g_1-1)\epsilon_V^lg_2\ldots g_k
\]
where $g_1\in V'$ and $V\neq V'\in M$. Then we may rewrite Equation \eqref{thmE:sum'_is_zero_V} as
\[
\sum_{V''\in M\setminus \{V,V'\}} x'_{V''}+(x'_V-z_V)+(x'_{V'}+z_V)=0,
\]
where again there are fewer summand of length $m$. So, by induction, $x'_V-z_V$ and $x'_{V'}+z_V$ belong to $\im_H(V)$ and $\im_H(V')$ respectively. Thus , we are done if we show that $z_V\in\im_H(V)\cap \im_H(V')$, and this is clear under the assumption that $v^l_V\in W$.

Finally, for any $V\in M$, the expression $y_V=\sum_{l\mid g_V^l\in N} v_V^l\epsilon_V^lg_V^l$ is the sum of all the terms in \eqref{thmE:sum'_is_zero_V} which have length $m+1$ and first symbol in $V$. So $y_V=0$ and there exists a pair of elements $l,l'$ with $g_V^l,g_V^{l'}\in N$ and such that $v_V^l\epsilon_V^lg_V^l+v_V^{l'}\epsilon_V^{l'}g_V^{l'}=0$, $(v_V^{l'})^{-1}v_V^l\epsilon_V^lg_V^l=-\epsilon_V^{l'}g_V^{l'}$, and
\[
(h_V^l-1)\epsilon_V^lg_V^l+(v_V^{l'}-1)\epsilon_V^{l'}g_V^{l'}=((v_V^{l'})^{-1}v_V^l-1)\epsilon_V^lg_V^l.
\]
Thus, we have an expression for each $x'_V$ with fewer summands of length $m$.
\end{proof}

In the rest of this section, we show that for the Bianchi group $\Gamma_1$ and a suitable chosen poset, the functor $H$ is acyclic but not pseudo-projective. In fact, consider the following presentation of $\Gamma_1$, see \cite{MR0372059},
\[
\Gamma_1=\langle a,b,c,d\mid a^3=b^2=c^3=d^2=(ac)^2=(ad)^2=(bc)^2=(bd)^2=1\rangle,
\]
as well as the following poset $\CP_1$ of subgroups of $\Gamma_1$,
\[
\xymatrix@C=30pt@R=15pt{
{\langle a,c\rangle}& {\langle a,d\rangle}& {\langle c, d\rangle}& {\langle b,c\rangle}& {\langle b,d\rangle}\\
{\langle a\rangle}\ar@{-}@(u,d)[u]\ar@{-}@(u,d)[ru]& {\langle c\rangle}\ar@{-}@(l,d)[lu]\ar@{-}@(u,d)[ru]\ar@{-}@(r,d)[rru]&& {\langle d\rangle}\ar@{-}@(u,d)[lu]\ar@{-}@(r,d)[ru]\ar@{-}@(l,d)[llu]& {\langle b\rangle}\ar@{-}@(u,d)[lu]\ar@{-}@(u,d)[u]\\
&&1\ar@{-}@(l,d)[llu]\ar@{-}@(u,d)[lu]\ar@{-}@(u,d)[ru]\ar@{-}@(r,d)[rru]&&
}
\]
\bigskip

\noindent\textbf{\cref{thm:integral_homology_decomposition_Bianchi}}
\textit{There is a  homotopy equivalence,
\[
\hocolim_{U\in \CP_1} B(U)\stackrel{\simeq}\longrightarrow B(\Gamma_1).
\]}
\begin{proof}
We need to show that $H$ is acyclic, i.e., that $\colim_1 H=0$. The usual chain complex \cite{GabrielZisman} gives that $\colim_1 H$ consists of the tuples
\[
(x_a,x'_a,x_b,x'_b,x_c,x'_c,x''_c,x_d,x'_d,x''_d),
\]
where $x^\cdot_y\in H(\langle y\rangle)$, such that 
\begin{align}\label{equ:kernel_in_C1_of_Gamma_1}
x_a+x'_a&=0\text{, }x_b+x'_b=0\text{, }x_c+x'_c+x''_c=0\text{, }x_d+x'_d+x''_d=0\text{, and }\\
x_a+x_c&=0\text{, }x'_a+x_d=0\text{, }x'_c+x'_d=0\text{, }x_b+x''_c=0\text{, }x'_b+x''_d=0.\nonumber
\end{align}
As $\langle c,d\rangle=\langle c\rangle\ast \langle d\rangle \cong \PSL_2(\ZZ)$, by Proposition \ref{prop:H_pseusoprojective_at_U} we have that $H$ is pseudo-projective at this subgroup. As $H(1)=0$, we have $\im_H(\langle c\rangle)=\im_H(\langle d\rangle)=0$, and hence
\begin{equation}\label{equ:Hpp_at_cd}
C+D=0\Rightarrow C=D=0\text{ for any }C\in H(\langle c\rangle)\text{ and any }D\in H(\langle d\rangle).
\end{equation}
From \eqref{equ:kernel_in_C1_of_Gamma_1}, it is easy to deduce that
\[
x_c+x_d=x'_c+x'_d=x''_c+x''_d=0,
\]
and hence, from \eqref{equ:Hpp_at_cd}, we get to $x_c=x_d=x'_c=x'_d=x''_c=x''_d=0$. Using \eqref{equ:kernel_in_C1_of_Gamma_1} again, we have that $x_a=x'_a=x_b=x'_b=0$ too, i.e., $\colim_1 H=0$.
\end{proof}

\begin{exm}\label{homology_Bianchi_groups}
The Bousfield-Kan spectral sequence \cite[XII.5.7]{BK1972} of the homotopy colimit in \cref{thm:integral_homology_decomposition_Bianchi} converges to $H_*(B(\Gamma_1);\ZZ)$ and has second page
\[
E^2_{p,q}=\colim_p H_q(B(\cdot);\ZZ).
\]
In addition, we have $\langle a,c\rangle\cong A_4$, $\langle a,d \rangle\cong\langle b,c\rangle\cong S_3$, and $\langle b,d\rangle\cong K$, where $A_4$ is alternating on $4$ letters, $S_3$ is symmetric on $3$ letters and $K$ is the Klein group. We refer the reader to \cite[Section 4]{SchwermerVogtmann} for the homology of these groups.

For $q=0$ we get a constant diagram over a contractible poset, so $E^2_{p,0}=\ZZ$ for $p=0$ and $E^2_{p,0}=0$ for $p>0$. For $q$ even, $q>0$, $H_q(B(\cdot);\ZZ)$ takes values different from $0$ only on $\langle a, c\rangle$ and $\langle b,d\rangle$. Thus, 
\begin{equation}\label{equ:all2torsion}
\colim_0 H_q(B(\cdot);\ZZ)=H_q(A_4)\oplus H_q(K)=(\ZZ/2)^l\text{ for some }l,
\end{equation}
and $E^2_{p,q}=0$ for $p>0$. For $q$ odd, we have that
\begin{equation}\label{equ:no3torsion}
\colim_1 H_q(B(\cdot);\ZZ)\text{ is $3$-torsion.}
\end{equation}
In fact, if $q\equiv 1\pmod 4$, the functor $H_q(B(\cdot);\ZZ)$ can be depicted as follows, 
\[
\xymatrix@C=30pt@R=15pt{
H_q(A_4)& \ZZ/2& \ZZ/3\oplus \ZZ/2& \ZZ/2& (\ZZ/2)^\frac{q+3}{2}\\
\ZZ/3\ar@{-}@(u,d)[u]\ar@{-}@(u,d)[ru]&\ZZ/3\ar@{-}@(l,d)[lu]\ar@{-}@(u,d)[ru]\ar@{-}@(r,d)[rru]&& \ZZ/2\ar@{-}@(u,d)[lu]\ar@{-}@(r,d)[ru]\ar@{-}@(l,d)[llu]& \ZZ/2.\ar@{-}@(u,d)[lu]\ar@{-}@(u,d)[u]
}
\]
It is easily checked that this functor is pseudo-projective at $\langle c,d\rangle$ and $\langle b,d\rangle$. Then, a similar computation to that in the proof of \cref{thm:integral_homology_decomposition_Bianchi}, yields that there is no $2$-torsion in $\colim_1 H_q(B(\cdot);\ZZ)$. For $q\equiv 3\pmod 4$, the computation is easier as the functor turns out to be pseudo-projective at $\langle a,d\rangle$, $\langle c,d\rangle$, $\langle b,c\rangle$,  and $\langle b,d\rangle$.

Summing up, $E^2_{p,q}$ is non-zero only at $(p,q)\in \{(0,k),(1,2k+1)\}_{k\geq 0}$. In particular, the spectral sequence collapses at $E_2$. In addition, Equations \eqref{equ:all2torsion} and \eqref{equ:no3torsion} ensure that we can solve the extension problems and then get
\[
H_{n}(B(\Gamma_1);\ZZ)\cong \begin{cases} \ZZ,&\text{if $n=0$,}\\
E^2_{0,n},&\text{if $n$ is odd,}\\
E^2_{0,n}\oplus E^2_{1,n-1},&\text{if $n$ is even, $n>0$.}
\end{cases}
\]
Explicit formulae for this result can be found in \cite[Theorem 5.5]{SchwermerVogtmann}
\end{exm}

\section{Dual}\label{section:dual}
We devote this section to dualize the previous results and we do not include the corresponding proofs. For a filtered poset $\CP$, it is immediate to dualize \cref{thm:Modelcategory_Ch(Ab)} to obtain a model category on $\Fun(\CP^\op,\Ch(R))$ such that a fibrant functor is $\lim$-acyclic and ${G\colon \CP^\op\to R\lmod}$ is fibrant if and only if
\begin{equation}\label{equ:fibrant_to_Ch(Ab)}
G(i)\to \lim _{\CP_{<i}}G\text{ is an epimorphism for every $i\in \CP$.}
\end{equation}
This condition is equivalent, over a filtered poset $\CP$, to the following notion.

\begin{defn}
\label{def:pseudo-injective}
A functor $G\colon \CP^\op\to R\lmod$ over a $\DCC$ poset is \emph{pseudo-injective} at $i\in \CP$ if, for every subset $J\subset\CP_{\le i}$, and elements $x_j\in \ker_G(j)=\bigcap_{k<j}\ker(G(k<j))$ with $j\in J$, there exists $x\in G(i)$ with $G(j<i)(x)=x_j$ for every $j\in \max J$. We say that $G$ is \emph{pseudo-injective} if $G$ is pseudo-injective at $i\in \CP$ for every $i\in \CP$. 
\end{defn}

\noindent\textbf{Theorem B*.}
\textit{
Let $\CP$ be a filtered poset, and $G\colon \CP^\op\to R\lmod$ be a functor. Then $G$ is fibrant if and only if $G$ satisfies \eqref{equ:fibrant_to_Ch(Ab)} if and only if $G$ is pseudo-injective. Moreover, in this case, $F$ is $\lim$-acyclic.
}

\begin{rmk}
The equivalence between \eqref{equ:fibrant_to_Ch(Ab)} and pseudo-injectiveness holds for a $\DCC$ poset if the functor takes Artinian $R$-modules as values. We do not know whether it is true for arbitrary functors over $\DCC$ posets.
\end{rmk}

\begin{defn}
\label{def:G-linearCC}
Let $G\colon \CP^\op \to \CC$ be a functor between a poset $\CP$ and a category $\CC$, and $i\in \CP$. We say that $\gamma\in \End_\CC(G(i))$  ($\gamma\in \Aut_\CC(G(i))$) is \emph{$G$-linear} if for all $j<i$
\[
G(j<i)\circ\gamma = \beta \circ G(j<i)\]
for some $\beta\in \End_\CC(G(j))$ ($\beta\in \Aut_\CC(G(j))$). We denote by $\End^G_\CC(i)$ ($\Aut^G_\CC(i)$) the submonoid (subgroup) of $G$-linear endomorphisms (automorphisms) of $G(i)$.
\end{defn}

\begin{defn}\label{def:weakMackeyfunctorAb_Pop} 
 Let $\CP$ be a filtered poset and let $G\colon \CP^\op\to R\lmod$ a contravariant functor. We say that $G$ is a \emph{weak Mackey functor} if for all $j<i$  there exists a morphism in $R\lmod$, $F(j<i)\colon G(j)\to G(i)$, such that $G(j<i)\circ F(j<i)=\alpha(i,j)$ with $\alpha(i,j)\in \End^G_{R\lmod}(j)$, and, for $k<i$, $j\not< k$, 
\[
\ker_G(k)\subseteq \ker(G(j<i)\circ F(k<i))
\]
We say that $G$ has a \emph{quasi-unit} if $\alpha(i,j)\in \Aut^G_{R\lmod}(j)$.   
\end{defn}

\begin{rmk}
Notice that the contravariant part of a Mackey functor over $R\lmod$ is a weak Mackey functor.
\end{rmk}

\noindent\textbf{Theorem C*.}\textit{
Let $G\colon \CP^{op}\to R\lmod$ be a weak Mackey functor with a quasi-unit (or the contravariant part of a Mackey functor with a quasi-unit). Then $G$ is fibrant and $\lim$-acyclic.
}\medskip

\begin{rmk}
If $F\colon \CP\to \SSets$ is the covariant part of a Mackey functor (or a weak Mackey functor) with quasi-unit, and   $H\colon \SSets^\op \to R\lmod$ is a contravariant functor, then $H\circ F\colon \CP^\op\to R\lmod$ is a contravariant weak Mackey functor.
\end{rmk}

\begin{exm}\label{exm:KishimotoLevi}
For a finite poset $\CP$ with an initial object, a notion of functor $G\colon \CP^{op}\to \Ab$ with a \emph{lower factoring section} is given in \cite[Definition 2.5]{KishimotoLevi}. If $\CP$ is filtered, it is immediate that such a functor is a weak Mackey functor according to Definition \ref{def:weakMackeyfunctorAb_Pop}, taking as quasi-unit $\alpha(i,j)=1_{F(j)}$. Then \cite[Theorem A]{KishimotoLevi} is a direct consequence of Theorem C*.
\end{exm}


\section{The proof of Theorem \ref{thm:pseudo-sii-locally_over_DCC}}\label{sec:prf_main_thm}

Let $\CP$ be a poset, and $F\colon \CP\to R\lmod$ be a functor. We say that $F$ is \emph{cofibrant} at $i\in \CP$ if the natural map
\[
\colim_{\CP_{<i}}F\to F(i)
\]
is injective. We say that $F$ is cofibrant if it is cofibrant at $i$ for every $i\in \CP$. Note that we have abused notation and termed some functors \emph{cofibrant} without any reference to a model category structure. If $\CP$ is filtered, then cofibrant has the usual meaning because of Theorem \ref{thm:Modelcategory_Ch(Ab)}(2). \medskip

\noindent
\textbf{Theorem \ref{thm:pseudo-sii-locally_over_DCC}}\textit{
Let $\CP$ be a DCC poset and $F\colon \CP\to R\lmod$ be a functor. Then $F$ is cofibrant if and only if $F$ is pseudo-projective.
}\medskip

We divide the proof into several lemmas.

\begin{lem}
\label{lem:sequence_aux_lem}
Let $\CP$ be a DCC poset, $i\in \CP$  and $F:\CP\to R\lmod$ be a functor such that $F$ is pseudo-projective at $i$.
Let $x=\oplus_{j<i}x_j\in \bigoplus_{j<i}F(j)$ satisfy
\[
\sum_{j<i}F(j<i)(x_j)=0.
\]
Then, there is a sequence $\{x^n\}_{n\geq 0}$, $x^n=\bigoplus_{j<i}x_j^n\in \bigoplus_{j<i}F(j)$, with $x^0=x$, 
\[
\sum_{j<i}F(j<i)(x^n_j)=0\text{, }x^{n+1}-x^n=\sum_{k<j\in \max\supp(x^n)} y_{k,j}\oplus-F(k<j)(y_{k,j}), 
 \]
 $[x^{n+1}]=[x^n]$ in $\colim_{j<i}F(j)$, and $\supp(x^{n+1})<\supp(x^n)$, for any $n\geq 0$, where ${y_{k,j}\in F(k)}$. In addition, there exists $N>0$ such that $x_j^n=0$ for all $j<i$ if $n\geq N$.
\end{lem}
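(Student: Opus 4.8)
The plan is to define the sequence by an explicit recursion, dispatch the three formal properties termwise, and concentrate on the termination statement, which is the only place the descending chain condition does real work.

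First I would set up the recursion. Suppose $x^n=\oplus_{j<i}x^n_j$ has been constructed with finite support and $\sum_{j<i}F(j<i)(x^n_j)=0$. Put $J=\supp(x^n)$, a finite subset of $\CP_{<i}\subseteq\CP_{\le i}$; the displayed equation restricts to $\sum_{j\in J}F(j<i)(x^n_j)=0$. Since $F$ is pseudo-projective at $i$, \cref{def:pseudo-projective} yields $x^n_j\in\im_F(j)$ for every $j\in\max J=\max\supp(x^n)$, so, grouping by source, I may write $x^n_j=\sum_{k<j}F(k<j)(y_{k,j})$ with $y_{k,j}\in F(k)$ and only finitely many $y_{k,j}$ nonzero. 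I then define
\[
x^{n+1}=x^n+\sum_{k<j\in\max\supp(x^n)}\bigl(y_{k,j}\oplus-F(k<j)(y_{k,j})\bigr),\qquad x^0=x,
\]
which is precisely the prescribed increment.

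Next I would verify the formal properties, all termwise. For $j\in\max\supp(x^n)$ the $j$-component of $x^{n+1}$ is $x^n_j-\sum_{k<j}F(k<j)(y_{k,j})=0$, so the maximal degrees are cleared; the only entries that change or appear sit in degrees $k<j$ for some $j\in\max\supp(x^n)$, while the remaining old entries are the non-maximal elements of $\supp(x^n)$. In either case every element of $\supp(x^{n+1})$ lies strictly below an element of $\supp(x^n)$, giving $\supp(x^{n+1})<\supp(x^n)$. The identity $\sum_{j<i}F(j<i)(x^{n+1}_j)=0$ follows from the same identity for $x^n$ together with functoriality $F(j<i)\circ F(k<j)=F(k<i)$, which annihilates each increment term after applying $\epsilon$. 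Finally each summand $y_{k,j}\oplus-F(k<j)(y_{k,j})$ is a defining relation of $\colim_{\CP_{<i}}F$, since $[y_{k,j}]=[F(k<j)(y_{k,j})]$ for $k<j$; hence $[x^{n+1}]=[x^n]$.

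The main obstacle is termination, and here I would use the descending chain condition through an ordinal rank rather than a naive finiteness count, since the down-set of the finite set $\supp(x^0)$ may itself be infinite in a $\DCC$ poset and finiteness of supports alone is not enough. Because $<$ is well-founded on a $\DCC$ poset, well-founded recursion defines $\operatorname{rank}(p)=\sup\{\operatorname{rank}(q)+1\mid q<p\}$, and for a finite nonempty set I put $\operatorname{rank}(J)=\max_{p\in J}\operatorname{rank}(p)$. The relation $\supp(x^{n+1})<\supp(x^n)$ then forces $\operatorname{rank}(\supp(x^{n+1}))<\operatorname{rank}(\supp(x^n))$ whenever $\supp(x^{n+1})\neq\emptyset$, because each of its finitely many elements has rank strictly below $\operatorname{rank}(\supp(x^n))$, and a maximum of finitely many ordinals below a fixed ordinal stays below it. A strictly decreasing sequence of ordinals is finite, so $\supp(x^N)=\emptyset$ for some $N$, i.e.\ $x^N=0$; the recursion then keeps $x^n=0$ for all $n\ge N$, which is the last assertion.
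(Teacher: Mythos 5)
Your construction is the same as the paper's: use pseudo-projectivity at $i$ to rewrite each maximal component $x^n_j$ as $\sum_{k<j}F(k<j)(y_{k,j})$ and subtract the corresponding colimit relations, which clears the maximal components, preserves the class in $\colim_{\CP_{<i}}F$ and the vanishing of $\sum_{j<i}F(j<i)(x^n_j)$, and pushes the support strictly downwards. The only divergence is the termination step, where the paper cites \cite[Lemma 2.6]{DiazRamos2009} for the fact that a $<$-decreasing sequence of finite subsets of a $\DCC$ poset reaches $\emptyset$, while you prove this inline via an ordinal rank on the well-founded order; both are correct.
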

\begin{proof}
This is a finer reformulation of \cite[Lemma 2.3]{DiazRamos2009} and we provide details. We define $x^{-1}=0$ and work by induction on $n\geq 0$ assuming that $x^n$ satisfying the properties in the statement has been already constructed. Then, as $\sum_{j<i}F(j<i)(x^n_j)=0$ and $F$ is pseudo-projective at $i$, for every $j\in \max\supp(x^n)$ we have that $x^n_j\in \im_F(j)$, i.e., there exists $\oplus_{k<j} y_{k,j}\in \bigoplus_{k<j} F(k)$ such that
\[
x^n_j=\sum_{k<j} F(k<j)(y_{k,j}).
\]
For every pair $(k,j)$ with $k<j<i$, we set
\[
x_{k,j}=\begin{cases}
y_{k,j}& \mbox{if }k<j\in \max \supp(x^n),\\
x^n_j& \mbox{if }k=j\not\in \max \supp(x^n),\\
0& \mbox{otherwise,}
\end{cases}
\]
and we define $x^{n+1}=\bigoplus_{j<i}x_j^{n+1}$ by
\[
x^{n+1}_j=\sum_{k\geq j}x_{j,k}.
\]
Then 
\begin{equation}\label{equ:long_computation}
\sum_{j<i}F(j<i)(x^{n+1}_j)=\sum_{j<i}F(j<i)(\sum_{k\geq j}x_{j,k})=\sum_{j\leq k<i}F(j<i)(x_{j,k}).
\end{equation}
In this last sum, if $j=k\notin \max \supp(x^n)$, the corresponding summand is $F(j<i)(x^n_j)$. The rest of summands we can be reordered as follows,
\begin{scriptsize}
\[
\sum_{\substack{j<k<i\\k\in \max \supp(x^n)}} F(j<i)(y_{j,k})=\sum_{\substack{k<i\\k\in \max \supp(x^n)}} F(k<i)\big(\sum_{j<k}F(j<k)(y_{j,k})\big)=\sum_{\substack{k<i\\k\in \max \supp(x^n)}} F(k<i)(x^n_k).
\]
\end{scriptsize}
Hence the sum \eqref{equ:long_computation} equals $\sum_{j<i}F(j<i)(x^n_j)$, and this is $0$ by hypothesis. From the construction above it easily follows that 
\[
x^{n+1}-x^n=\sum_{k<j\in \max\supp(x^n)} y_{k,j}\oplus-F(k<j)(y_{k,j}),
\]
and, from here, it is clear that $[x^{n+1}]=[x^n]$ in $\colim_{j<i}F(j)$ and ${\supp(x^{n+1})<\supp(x^n)}$. From this latter condition and \cite[Lemma 2.6]{DiazRamos2009} we obtain $N>0$ with the stated property.
\end{proof}

\begin{lem}
Let $\CP$ be a DCC poset, $F\colon \CP\to R\lmod$, and $i\in \CP$. If $F$ is pseudo-projective at $i$, then $F$ is cofibrant at $i$.
\end{lem}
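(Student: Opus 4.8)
The plan is to show directly that the natural map $\epsilon\colon \colim_{\CP_{<i}}F\to F(i)$ is injective, which is exactly the assertion that $F$ is cofibrant at $i$. First I would recall the explicit presentation of the colimit: since $\CP_{<i}$ is an arbitrary (not necessarily directed) poset, $\colim_{\CP_{<i}}F$ is the quotient of $\bigoplus_{j<i}F(j)$ by the relations identifying $x_j\in F(j)$ with $F(j<k)(x_j)$ for $j<k<i$. Every class in the colimit is therefore represented by a finitely supported element $x=\oplus_{j<i}x_j$, and $\epsilon$ sends its class $[x]$ to $\sum_{j<i}F(j<i)(x_j)$. Consequently, $\ker\epsilon$ is precisely the set of classes $[x]$ of elements $x=\oplus_{j<i}x_j$ satisfying $\sum_{j<i}F(j<i)(x_j)=0$.

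The core of the argument is then to feed each such $x$ into \cref{lem:sequence_aux_lem}. Given $x=\oplus_{j<i}x_j$ with $\sum_{j<i}F(j<i)(x_j)=0$, that lemma --- whose construction is exactly where pseudo-projectivity at $i$ is invoked --- produces a sequence $\{x^n\}_{n\ge 0}$ with $x^0=x$, with $[x^{n+1}]=[x^n]$ in $\colim_{\CP_{<i}}F$ for every $n$, and with strictly decreasing supports $\supp(x^{n+1})<\supp(x^n)$; moreover, by the $\DCC$ hypothesis, there is an $N$ with $x^n=0$ for all $n\ge N$. Since the classes are all equal, I would chain the equalities $[x]=[x^0]=[x^1]=\cdots=[x^N]=[0]=0$ to conclude that $[x]=0$ in the colimit. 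As $[x]$ was an arbitrary element of $\ker\epsilon$, this gives $\ker\epsilon=0$, hence $\epsilon$ is injective and $F$ is cofibrant at $i$.

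In this formulation almost all of the difficulty has already been absorbed into \cref{lem:sequence_aux_lem}, so the present lemma is essentially a bookkeeping step: one only needs to verify that $\ker\epsilon$ is cut out by the vanishing condition $\sum_{j<i}F(j<i)(x_j)=0$ and that the colimit classes $[x^n]$ are genuinely constant in $n$. The single point requiring real care is the termination of the process --- that the strictly decreasing chain of supports reaches the empty support in finitely many steps --- which is precisely where the $\DCC$ assumption (via \cite[Lemma 2.6]{DiazRamos2009}) is indispensable; without it the sequence $\{x^n\}$ need not stabilize at $0$ and the class $[x]$ could fail to vanish.
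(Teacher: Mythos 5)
Your proof is correct and follows the same route as the paper's: both reduce the claim to \cref{lem:sequence_aux_lem}, which produces from any $x$ with $\sum_{j<i}F(j<i)(x_j)=0$ a sequence of representatives of the same colimit class that terminates at $0$ by $\DCC$, giving $[x]=0$. The only difference is that you spell out the explicit presentation of the colimit and the identification of $\ker\epsilon$, which the paper leaves implicit.
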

\begin{proof}
Let $\epsilon\colon \colim_{\CP_{<i}}F\to F(i)$ be the corresponding natural map and 
${[x]\in \ker(\epsilon)}$ with $x\in \bigoplus_{j<i} F(j)$. By Lemma \ref{lem:sequence_aux_lem}, there exists a sequence $\{x^n\}_{n\geq 0}$ with $x^n\in \bigoplus_{j<i} F(j)$ such that $x^0=x$, $[x^{n+1}]=[x^n]$ and $x^N=0$ for $N$ big enough. Hence $[x^0]=[x^N]=[0]=0$ and the Lemma is proven.
\end{proof}
\begin{lem}
Let $\CP$ be a $\DCC$ poset, $F\colon \CP\to R\lmod$, and $i\in \CP$. If $F$ is cofibrant at $j$ for every $j\le i$, then $F$ is pseudo-projective at $j$ for every $j\le i$.
\end{lem}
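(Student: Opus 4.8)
The plan is to prove, by well-founded (i.e.\ $\DCC$) induction on $j\in\CP_{\le i}$, that $F$ is pseudo-projective at $j$; since $\CP$ satisfies $\DCC$ this is a legitimate induction, and the minimal elements of $\CP_{\le i}$ are handled vacuously (there $\CP_{<j}=\emptyset$). So fix $j\le i$, assume $F$ is pseudo-projective at every $j'<j$, take a finite $J\subset\CP_{\le j}$ and $\oplus_{l\in J}x_l$ with $\sum_{l\in J}F(l<j)(x_l)=0$, and let $l^*\in\max J$. If $j\in J$ then $j=\max J$ and the hypothesis reads $x_j=-\sum_{l\in J,\,l<j}F(l<j)(x_l)$, which lies in $\im_F(j)$ by definition; so we may assume $j\notin J$, i.e.\ $J\subset\CP_{<j}$. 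Then $x:=\oplus_{l\in J}x_l\in\bigoplus_{l<j}F(l)$ represents a class $[x]\in\colim_{\CP_{<j}}F$ with $\epsilon([x])=\sum_{l}F(l<j)(x_l)=0$, and cofibrancy of $F$ at $j$ forces $[x]=0$.

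Next I would unwind $[x]=0$ through the standard cokernel presentation of the colimit: there are finitely many generators $y_\alpha\in F(k_\alpha)$ attached to strict edges $k_\alpha<l_\alpha<j$ (identity edges contribute trivially) with
\[
x=\sum_\alpha\bigl(\iota_{k_\alpha}(y_\alpha)-\iota_{l_\alpha}(F(k_\alpha<l_\alpha)(y_\alpha))\bigr),
\]
where $\iota_m\colon F(m)\hookrightarrow\bigoplus_{l<j}F(l)$ are the structure maps. Reading off the $F(l^*)$-component gives
\[
x_{l^*}=\sum_{\alpha:\,k_\alpha=l^*}y_\alpha-\sum_{\alpha:\,l_\alpha=l^*}F(k_\alpha<l^*)(y_\alpha),
\]
in which the second sum already lies in $\im_F(l^*)$, since each $k_\alpha<l^*$. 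The whole difficulty is therefore concentrated in the first sum, whose generators $y_\alpha\in F(l^*)$ come from edges reaching strictly above $l^*$, namely $l_\alpha>l^*$. Writing $T=\supp(x)\cup\{k_\alpha\}\cup\{l_\alpha\}$, note that when $\max T=\max\supp(x)$ no such edge survives: for any $m\in\max T$ maximality forbids $k_\alpha=m$ (otherwise $l_\alpha>m$ would lie in $T$), so $x_m=-\sum_{l_\alpha=m}F(k_\alpha<m)(y_\alpha)\in\im_F(m)$, and in particular $x_{l^*}\in\im_F(l^*)$.

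The main obstacle—and the heart of the argument—is to reduce to this favourable situation by eliminating the auxiliary elements of $T$ lying strictly above $\supp(x)$. I would argue by a secondary induction on $\max T$ and on the number of top generators, in the spirit of \cite[Lemmas~2.3 and~2.6]{DiazRamos2009}: pick $m^*\in\max T\setminus\max\supp(x)$; then $m^*\notin\supp(x)$, so the component identity at $m^*$ reads $\sum_{\alpha:\,l_\alpha=m^*}F(k_\alpha<m^*)(y_\alpha)=0$. Since $m^*<j$, the main induction hypothesis makes $F$ pseudo-projective at $m^*$, which lets me rewrite the maximal $y_\alpha$ among $\{k_\alpha:l_\alpha=m^*\}$ as elements of $\im_F(k_\alpha)$ and thereby replace the chain $c=\sum_\alpha(\dots)$ by a chain $c'$ with $\partial c'=x$ (so $x$, and hence $x_{l^*}$, is unchanged) whose associated set $T'$ has strictly smaller complexity. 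Iterating, the process terminates by $\DCC$ at a representation with $\max T=\max\supp(x)$, and the previous paragraph then yields $x_{l^*}\in\im_F(l^*)$, completing the induction.

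The delicate point I expect to have to get exactly right is precisely this cancellation step: one must verify that expressing the top generators $y_\alpha$ through lower ones removes all edges incident to $m^*$ without re-creating edges above $\supp(x)$ and without altering the boundary $x$, so that the secondary complexity genuinely decreases. This is the analogue, in the converse direction, of the bookkeeping carried out in \cref{lem:sequence_aux_lem}, and it is the only place where cofibrancy (through $[x]=0$) and pseudo-projectivity at the strictly smaller elements must be combined.
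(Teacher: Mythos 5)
Your proposal is correct and follows essentially the same route as the paper: cofibrancy at $j$ gives $[x]=0$, you unwind the colimit presentation into elementary edge terms, and you eliminate the auxiliary maximal vertices above $\supp(x)$ by a secondary $\DCC$ induction using pseudo-projectivity at those strictly smaller vertices. The only differences are cosmetic: the paper delegates the ``delicate'' inner rewriting you flag to \cref{lem:sequence_aux_lem} (applied to $y=\oplus_{l<m}y_{l,m}$, which kills all edges into $m$ at once and replaces them by edges strictly below $m$), and it tracks only the set of edge targets $K$ rather than your $T$; your explicit treatment of the case $j\in J$ is a point the paper leaves implicit.
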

\begin{proof}
Since $\CP_{\le p}$ is a $\DCC$ poset, we proceed by induction. If $j\le i$ is minimal in $\CP$ then $F$ is pseudo-projective at $i$ by definition. Thus, consider now $j\leq i$ such that $F$ is pseudo-projective at $k$ for all $k<j$. We show that $F$ is pseudo-projective at $j$ too. So let $x=\oplus_{k<j}x_k\in \bigoplus_{k<j}F(k)$ be such that
\[
\sum_{k<j}F(k<j)(x_k)=0.
\]
This is equivalent to that $\epsilon([x])=0$ for the natural map $\epsilon\colon \colim_{k<j} F\to F(j)$. By hypothesis, $F$ is cofibrant at $j$, and hence $[x]=0$. In turn, this equality is equivalent to the existence of elements $y_{l,k}\in F(l)$ for $l<k<j$ such that finitely many of them are different from zero and with
\begin{equation}\label{eq:x_k_in_colim}
x=\oplus_{k<j}x_k=\sum_{l<k<j} y_{l,k}\oplus -F(l<k)(y_{l,k}),
\end{equation}
which implies that, for any $k<j$, 
\begin{equation}\label{equ:long_computation3}
x_k=\sum_{k<l}y_{k,l}-\sum_{l<k}F(l<k)(y_{l,k}).
\end{equation}
Let $K=\{k\in \CP_{<j}\mid \exists\ l<k\mbox{ with }y_{l,k}\not=0\}$. We are about to show that we can choose the elements $y_{l,k}$'s appearing in \eqref{eq:x_k_in_colim} subject to the constraint that $\max K\subseteq \supp(x)$. Thus let $m\in \max K\setminus \supp(x)$. Then 
\begin{equation}\label{equ:long_computation4}
x_m=0=-\sum_{l<m}F(l\to m)(y_{l,m}). 
\end{equation}
We can rewrite Equation \eqref{eq:x_k_in_colim} as follows,
\begin{align*}
x=&\sum_{l<m<j} y_{l,m}\oplus -F(l<m)(y_{l,m})+\sum_{\substack{l<k<j\\k\neq m}} y_{l,k}\oplus -F(l<k)(y_{l,k})\\
=&\Big(\oplus_{l<m}y_{l,m}\Big)-\Big(\oplus_m \sum_{l<m}F(l<m)(y_{l,m})\Big)+\sum_{\substack{l<k<j\\ k\neq m}}y_{l,k}\oplus-F(l< k)(x_{l,k}).
\end{align*}
which, by Equation \eqref{equ:long_computation4}, we can simplify to
\begin{equation}\label{eq:x_k_in_colimbis}
x=y+\sum_{\substack{l<k<j\\ k\neq m}}y_{l,k}\oplus-F(l< k)(y_{l,k}),
\end{equation}
where $y=\oplus_{l<m}y_{l,m}\in \oplus_{l<m} F(l)$. As $F$ is pseudo-projective at $m<j$ by induction hypothesis, we may apply Lemma \ref{lem:sequence_aux_lem} to the element $y$ to obtain a sequence of elements $\{y^n\}_{n\geq 0}$ such that $y^0=y$, $[y^{n+1}]=[y^n]$ for all $n\geq 0$, and $y^N=0$ for $N$ big enough. In addition, as $\supp(y^{n+1})<\supp(y^n)$ and $\supp(y)<\{m\}$, we obtain that  
\begin{equation}\label{equ:long_computation2}
y^n-y^{n+1}=\sum_{l<k\in \max\supp(y^n)} z_{l,k}\oplus-F(l<k)(z_{l,k}) =\sum_{l<k<m} z_{l,k}\oplus-F(l<k)(z_{l,k})
\end{equation}
for elements $z_{l,k}\in F(l)$. Define $y^0_{l,k}=y_{l,k}$, assume by induction that 
\begin{equation}\label{eq:x_k_in_colimbisbis}
x=y^n+\sum_{\substack{l<k<j\\ k\neq m}}y^n_{l,k}\oplus-F(l< k)(y^n_{l,k}),
\end{equation}
for elements $y^n_{l,k}\in F(l)$, and note that this holds for $n=0$ by Equation \eqref{eq:x_k_in_colimbis}. For the induction step, we may write
\[
x=y^{n+1}-y^{n+1}+y^n+\sum_{\substack{l<k<j\\ k\neq m}}y^n_{l,k}\oplus-F(l< k)(y^n_{l,k})=y^{n+1}+\sum_{\substack{l<k<j\\ k\neq m}}y^{n+1}_{l,k}\oplus-F(l< k)(y^{n+1}_{l,k}),
\]
for  elements $y^{n+1}_{l,k}\in F(l)$, where in the last equality we have employed Equation \eqref{equ:long_computation2}. Hence, for $n=N$, Equation \eqref{eq:x_k_in_colimbisbis} simplifies to
\[
x=\sum_{\substack{l<k<j\\ k\neq m}}y^N_{l,k}\oplus-F(l< k)(y^N_{l,k}).
\]
Repeating this process for every element $m\in \max K\setminus \supp(x)$ we find a decomposition similar to Equation \eqref{eq:x_k_in_colim},
\[
x=\sum_{l<k<j}y'_{l,k}\oplus-F(l< k)(y'_{l,k}),
\]
and satisfying that, for $K'=\{k\in \CP_{<j}\mid \exists\ l<k\mbox{ with }y'_{l,k}\not=0\}$, we have 
\[
\max K'\setminus \supp(x)< \max K\setminus \supp(x).
\]
Iterating this procedure we obtain a sequence of sets $\{K^n\}_{n\geq 0}$ and decompositions similar to Equation \eqref{eq:x_k_in_colim} with $K^0=K$, $K^1=K'$, and such that 
\[
\max K^{n+1}\setminus \supp(x)< \max K^n\setminus \supp(x).
\]
Setting $J^n=\max K^n\setminus \supp(x)$ and applying  {\cite[Lemma 2.6]{DiazRamos2009}} we find $N$ such that $J^N=\emptyset$, i.e., $\max K^N\subseteq \supp(x)$. For the corresponding decomposition,
\[
x=\sum_{l<k<j}\hat{y}_{l,k}\oplus-F(l< k)(\hat{y}_{l,k}),
\]
let $k$ belong to $\max \supp(x)$ so that we have, similarly to Equation \eqref{equ:long_computation3},
\[
x_k=\sum_{k<l}\hat{y}_{k,l}-\sum_{l<k}F(l<k)(\hat{y}_{l,k}).
\]
If $\sum_{k<l}\hat{y}_{k,l}\neq 0$, there exists some $l>k$ such that $\hat{y}_{l,k}\neq 0$, which is a contradiction with $\max K^N\subseteq \supp(x)$. Hence, $\sum_{k<l}\hat{y}_{k,l}=0$ and $x_k\in \im_F(k)$.
\end{proof}

\end{document}